\newcommand*{\da@rightarrow}{\mathchar"0\hexnumber@\symAMSa 4B }
\newcommand*{\xdashrightarrow}[2][]{%
  \mathrel{%
    \mathpalette{\da@xarrow{#1}{#2}{}\da@rightarrow{\,}{}}{}}}
\newcommand*{\da@xarrow}[7]{
\sbox0{$\ifx#7\scriptstyle\scriptscriptstyle\else\scriptstyle\fi#5#1#6\m@th$}
  \sbox2{$\ifx#7\scriptstyle\scriptscriptstyle\else\scriptstyle\fi#5#2#6\m@th$}
  \sbox4{$#7\dabar@\m@th$}
  \dimen@=\wd0 
  \ifdim\wd2 >\dimen@
    \dimen@=\wd2 
  \fi
  \count@=2 %
  \def\da@bars{\dabar@\dabar@}%
  \@whiledim\count@\wd4<\dimen@\do{
    \advance\count@\@ne \expandafter\def\expandafter\da@bars\expandafter{\da@bars\dabar@ }}
  \mathrel{#3}\mathrel{\mathop{\da@bars}\limits \ifx\\#1\\\else _{\copy0} \fi \ifx\\#2\\
   \else  ^{\copy2} \fi}   \mathrel{#4}}
\newcommand{\cat}{\mathcal{C}}
\newcommand{\catname}[1]{\mathbf{#1}}
\newcommand{\rel}{\catname{Rel}}
\newcommand{\id}{\mathrm{id}}
\newcommand{\relto}{\dashrightarrow}
\newcommand{\idx}{\mathbf{1}}
\newcommand{\rot}{\curvearrowleft}
\newcommand{\op}{\mathrm{op}}
\newcommand{\arrows}{\rightrightarrows}
\newcommand{\suchthat}{\mid}
\newcommand{\Z}{\mathbb{Z}}
\newcommand{\R}{\mathbb{R}}
\newtheorem{thm}{Theorem}[section]
\newtheorem{prop}[thm]{Proposition}
\newtheorem{lemma}[thm]{Lemma}
\theoremstyle{definition}
\newtheorem{definition}[thm]{Definition}
\newtheorem{remark}[thm]{Remark}
\newtheorem{example}[thm]{Example}
\newtheorem*{ack}{Acknowledgements}
\numberwithin{equation}{section}
\begin{document}

\title{Frobenius objects in the category of relations}
\author{Rajan Amit Mehta}
\author{Ruoqi Zhang}
\address{Department of Mathematics \& Statistics\\
Smith College\\
44 College Lane\\
Northampton, MA 01063}
\email{rmehta@smith.edu}
\email{rzhang89@smith.edu}

\subjclass[2010]{
18B10, %Category of relations, additive relations
18B40, %Groupoids, semigroupoids, semigroups, groups (viewed as categories)
18D35, %Structured objects in a category (group objects, etc.)
18G30, %Simplicial sets, simplicial objects (in a category)
20L05, %Groupoids (i.e. small categories in which all morphisms are isomorphisms)
57R56%Topological quantum field theories
} 
\keywords{category of relations, cohomology, Frobenius algebra, groupoid, simplicial set, topological quantum field theory}

\begin{abstract}
We give a characterization, in terms of simplicial sets, of Frobenius objects in the category of relations. This result generalizes a result of Heunen, Contreras, and Cattaneo showing that special dagger Frobenius objects in the category of relations are in correspondence with groupoids. As an additional example, we construct a Frobenius object in the category of relations whose elements are certain cohomology classes in a compact oriented Riemannian manifold.
\end{abstract}

\maketitle

\section{Introduction}
It is well-known that $2$-dimensional topological quantum field theories (TQFTs) are classified by commutative Frobenius algebras \cite{abrams}. This result can be generalized to TQFTs taking values in an arbitrary symmetric monoidal category $\cat$, and this leads one to consider the notion of a \emph{Frobenius object} in $\cat$ (see e.g.\ \cite{kock-book}). 

The purpose of this paper is to study Frobenius objects in the category of relations $\rel$. The objects of $\rel$ are sets, and morphisms from a set $X$ to a set $Y$ are subsets of $X \times Y$. Given morphisms $S \subseteq X \times Y$ and $T \subseteq Y \times Z$, the composition $T \circ S \subseteq X\times Z$ is defined as 
\[T \circ S  = \{(x,z)\suchthat (x,y) \in S \text{ and }(y,z) \in T \mbox{ for some } y \in Y\}.\]

Our motivation arises from the classical analogue of topological field theory, where the target category is the ``symplectic category''. In the symplectic category, the objects are symplectic manifolds and, in principle, the morphisms are Lagrangian relations. However, since Lagrangian relations don't always compose well, the rigorous definition of the symplectic category is more complicated \cite{ww}. $\rel$ provides a toy model for the symplectic category where these difficulties do not appear.

Our main result is that, given a Frobenius object in $\rel$, one can construct an associated simplicial set. The simplicial sets that arise in this way have certain special properties; for example, for $k \geq 1$, every $k$-simplex is completely determined by its $1$-skeleton. There are other properties that we describe in Section \ref{sec:simplicial}. The simplicial sets are also equipped with an automorphism $\hat{\alpha}$ of the set of $1$-simplices, satisfying certain compatibility conditions with the simplicial structure. We show that simplicial sets, equipped with a map $\hat{\alpha}$ as above and satisfying the specified properties, are in one-to-one correspondence with Frobenius objects in $\rel$. 

A closely related result is due to Heunen, Contreras, and Cattaneo \cite{hcc}, who showed that special dagger Frobenius objects in $\rel$ are in one-to-one correspondence with groupoids. From the point of view of TQFT, the special condition is too restrictive, since special Frobenius algebras do not distinguish between surfaces of different genus. In the context of $\rel$, the dagger condition is also restrictive. Our result can be viewed as a generalization of the result of \cite{hcc} where these conditions are removed. We discuss this relationship in Section \ref{sec:groupoids}.

In Section \ref{sec:commutative}, we consider commutative Frobenius objects in $\rel$, and we find some additional properties satisfied by the corresponding simplicial sets. 

Finally, in Section \ref{sec:cohomology}, we give a construction of a Frobenius object in $\rel$ consisting of cohomology classes in a compact oriented Riemannian manifold. These Frobenius objects never satisfy the dagger condition when the manifold has positive dimension, so they do not correspond to groupoids.

\begin{ack}
We thank Ivan Contreras for many helpful discussions. R.Z. was partially supported by the Ellen Borie Fund, via the Summer Undergraduate Research Fellowship program at Smith College. 
\end{ack}

%================
\section{Frobenius objects in $\rel$}

\subsection{Definition}
We use the notation $S: X \relto Y$ to denote a subset $S \subseteq X \times Y$, viewed as a morphism from $X$ to $Y$ in $\rel$. For any set $X$, the identity morphism $\idx: X \relto X$ is the diagonal subset $\Delta(X) \subseteq X \times X$. We use $\{\bullet\}$ to represent the set with one element. 

\begin{definition}
\label{def1}
A \emph{Frobenius object} in $\rel$ is a set $X$ equipped with morphisms
\begin{itemize}
\item $\epsilon:  X \relto \{\bullet\}$ (counit)
\item $\eta: \{\bullet\} \relto X$ (unit)
\item $\delta: X \relto  X \times X$ (comultiplication)
\item $\mu: X \times X \relto X$ (multiplication)
\end{itemize}
satisfying the following axioms:
\begin{itemize}
\item (\textbf{U}) Unit Axiom:
\[\mu \circ (\eta \times \idx) = \idx = \mu \circ (\idx \times \eta)\]
\item (\textbf{C}) Counit Axiom: 
\[(\epsilon \times \idx) \circ \delta  = \idx = (\idx \times \epsilon)  \circ \delta\]
\item (\textbf{F}) Frobenius Axiom: 
\[(\idx \times \mu) \circ (\delta \times \idx ) = \delta \circ \mu = (\mu \times \idx) \circ (\idx \times \delta)\]
\end{itemize}
\end{definition}

%====
\subsection{Graphical Calculus}

The relationship between Frobenius objects and 2-dimensional TQFT motivates the use of graphical calculus for visualizing the morphisms in Definition \ref{def1} and their compositions. The morphisms $\epsilon, \eta, \delta, \mu$ are depicted as follows:
\begin{center}
\begin{tikzpicture}[scale=0.6, transform shape]
\begin{scope}[tqft/every boundary component/.style = {draw},tqft/cobordism edge/.style = {draw}]
\pic[tqft/cup, at = {(2, 0)}];
\node[above] (A) at (2.8, 0.2) {$X$};
\node[below] (B) at (2.8,-0.5) { $\{\bullet\}$}; 
\path[->] (A) edge [dotted] node[right] {$\epsilon$} (B) ;

\pic[tqft/cap, at = {(5.5, 1.7)}];
\node[above] (C) at (6.3, 0.2)  { $\{\bullet\}$};
\node[below] (D) at (6.3,-0.5)  {$X$}; 
\path[->]
(C) edge [dotted] node[right] {$\eta$} (D) ;

\pic[tqft/pair of pants, at = {(10, 1)}];
\node[above] (E) at (12, 1) { $X$};
\node[below] (F) at (12, -1)  { $X \times X$}; 
\path[->] (E) edge [dotted] node[right] {$\delta$} (F) ;

\pic[tqft/reverse pair of pants, at = {(14.5, 1)}];
\node[above] (G) at (17.5, 1) {$X \times X$};
\node[below] (H) at (17.5, -1) { $X$};
\path[->]
(G) edge [dotted] node[right] {$\mu$} (H);
\end{scope}
\end{tikzpicture}
\end{center}
We also use a cylinder to depict the identity morphism $\idx: X \relto X$. However, we will frequently omit cylinders when it is aesthetically preferable.

The Unit (\textbf{U}), Counit (\textbf{C}), and Frobenius (\textbf{F}) Axioms are depicted, respectively, as follows:
% Unit Axiom
\begin{center}
\begin{tikzpicture}[scale=0.5, transform shape]
\begin{scope}[tqft/every boundary component/.style = {draw},tqft/cobordism edge/.style = {draw}]

\node at (-3,1) {\LARGE (\textbf{U})};
\pic[tqft/reverse pair of pants, name = a, at = {(-1.2, 0.8)}];
\pic[tqft/cap, anchor=outgoing boundary 1, at=(a-incoming boundary 1)];
\pic[tqft/cylinder,anchor=outgoing boundary 1, at = (a-incoming boundary 2)];

\path[draw, dotted, ->] (2.2, 2.8) node[above] {\Large $X$} --(2.2,2) node[right] {\Large $\eta \times \idx$} -- (2.2, 1.2) node[below] {\Large $X \times X$}; 

\path[draw,  dotted, ->] (2.2, 0.5) -- (2.2,-0.3) node[right] {\Large $\mu$} -- (2.2, - 1) node[below] {\Large $X$};

\node[text width = 1cm] at (5,1) {\Huge $=$};

\pic[tqft/cylinder, name = b, at = {(7.5,0.8)}];
\pic[tqft/cylinder,anchor=outgoing boundary 1, at = (b-incoming boundary 1)]; 
\path[draw, dotted,  ->] (8.5, 2.8) node[above] {\Large $X$}-- (8.5,1) node[right] {\Large $\idx$} -- (8.5, - 1) node[below] {\Large $X$};

\node[text width = 1.3cm] at (11,1) {\Huge $=$};

\pic[tqft/reverse pair of pants, name = c, at={(13,0.8)}];
\pic[tqft/cap,anchor=outgoing boundary 1, at=(c-incoming boundary 2)];
\pic[tqft/cylinder, anchor=outgoing boundary 1, at=(c-incoming boundary 1)]; 
\path[draw,  dotted, ->] (16.2, 2.8) node[above] {\Large $X$} --(16.2,2) node[right] {\Large $\idx \times \eta$} -- (16.2, 1.2) node[below] {\Large $X \times X$}; 

\path[draw, dotted,  ->] (16.2, 0.5) -- (16.2,-0.3) node[right] {\Large $\mu$} -- (16.2, - 1) node[below] {\Large $X$};
\end{scope}
\end{tikzpicture}

% Counit Axiom
\begin{tikzpicture}[scale=0.5, transform shape]
\begin{scope}[tqft/every boundary component/.style = {draw},tqft/cobordism edge/.style = {draw}]

\node at (-3,1) {\LARGE (\textbf{C})};
\pic[tqft/pair of pants, name = a, at={(-0.2,2.8)}];
\pic[tqft/cup,anchor=incoming boundary 1, at=(a-outgoing boundary 1)]; 
\pic[tqft/cylinder,anchor=incoming boundary 1, at=(a-outgoing boundary 2)];

\path[draw,dotted, ->] (2.2, 2.8) node[above] {\Large $X$} --(2.2,2) node[right] {\Large $\delta$} -- (2.2, 1.2) node[below] {\Large $X \times X$}; 

\path[draw,dotted, ->] (2.2, 0.5) -- (2.2,-0.3) node[right] {\Large $\epsilon \times 1$} -- (2.2, - 1) node[below] {\Large $X$};

\node[text width =1.3cm] at (5,1) {\Huge $=$};

\pic[tqft/cylinder, name =b, at = {(7.5,0.8)}];
\pic[tqft/cylinder,anchor=outgoing boundary 1, at = (b-incoming boundary 1)];

\path[draw,dotted, ->] (8.5, 2.8) node[above] {\Large $X$}-- (8.5,1) node[right] {\Large $\idx$} -- (8.5, - 1) node[below] {\Large $X$};

\node[text width = 1.3cm] at (11,1) {\Huge $=$};

\pic[tqft/pair of pants, name =c, at = {(14,2.8)}];
\pic[tqft/cup,anchor=incoming boundary 1, at = (c-outgoing boundary 2)];
\pic[tqft/cylinder,anchor=incoming boundary 1, at=(c-outgoing boundary 1)];
\path[draw, dotted, ->] (16.2, 2.8) node[above] {\Large $X$} --(16.2,2) node[right] {\Large $\delta$} -- (16.2, 1.2) node[below] {\Large $X \times X$}; 

\path[draw, dotted, ->] (16.2, 0.5) -- (16.2,-0.3) node[right] {\Large $\idx \times \epsilon$} -- (16.2, - 1) node[below] {\Large $X$};
\end{scope}
\end{tikzpicture}

% Frobenius Axiom
\begin{tikzpicture}[scale = 0.5, transform shape]
\begin{scope}[tqft/every boundary component/.style = {draw},tqft/cobordism edge/.style = {draw}]

\node at (-3,1) {\LARGE (\textbf{F})};
\pic[tqft/pair of pants, name = a, at = {(-0.3,2.8)}]; 
\pic[tqft/cylinder,anchor=incoming boundary 1, at = (a-outgoing boundary 1)]; 
\pic[tqft/reverse pair of pants, anchor=incoming boundary 1, name = b, at = (a-outgoing  boundary 2)]; 
\pic[,tqft/cylinder,anchor=outgoing boundary 1, at= (b-incoming boundary 2)]; 

\path[draw,dotted, ->] (4.5,2.8) node[above] {\Large $X \times X$} --(4.5,2) node[right] {\Large $\delta \times \idx$} -- (4.5, 1.2) node[below] {\Large $X \times X \times X$}; 
\path[draw,dotted, ->] (4.5, 0.5) -- (4.5,-0.3) node[right] {\Large $\idx \times \mu$} -- (4.5, - 1) node[below] {\Large $X \times X$};

\node[text width =1.3cm] at (7,1) {\Huge $=$};

\pic[tqft/reverse pair of pants, name=c, at = {(8,2.8)}]; 
\pic[tqft/pair of pants, anchor=incoming boundary 1, at =(c-outgoing boundary 1)];

\path[draw, dotted, ->] (11.2,2.8) node[above] {\Large $X \times X$} --(11.2,2) node[right] {\Large $\mu$} -- (11.2, 1.2) node[below] {\Large $X$}; 
\path[draw, dotted, ->] (11.2, 0.5) -- (11.2,-0.3) node[right] {\Large $\delta$} -- (11.2, - 1) node[below] {\Large $X \times X$};

\node[text width = 1.3cm] at (13.2,1) {\Huge $=$};

\pic[tqft/cylinder, name = d, at = {(14.5,2.8)}];
\pic[tqft/reverse pair of pants, anchor=incoming boundary 1, name = e, at=(d-outgoing boundary 1)];
\pic[tqft/pair of pants, anchor=outgoing boundary 1, name  = f, at= (e-incoming boundary 2)];
\pic[tqft/cylinder,  anchor=incoming boundary 1, at = (f-outgoing boundary 2)]; 

\path[draw,dotted,  ->] (20.3,2.8) node[above] {\Large $X \times X$} --(20.3,2) node[right] {\Large $\idx\times \delta $} -- (20.3, 1.2) node[below] {\Large $X \times X \times X$};
\path[draw,dotted, ->] (20.3, 0.5) -- (20.3,-0.3) node[right] {\Large $\mu  \times \idx $} -- (20.3, - 1) node[below] {\Large $X \times X$};
\end{scope}
\end{tikzpicture}
\end{center}

It should be emphasized that the morphisms in $\rel$ are sets; for example, $\mu$ is a subset of $X^3$. An element of the set can be depicted by labeling the boundary components. For example, $(x,y,z) \in \mu$ is depicted as follows:

\begin{center}
\begin{tikzpicture}[scale=0.5, transform shape]
\begin{scope}[tqft/every boundary component/.style = {draw},tqft/cobordism edge/.style = {draw}]
\pic[tqft/reverse pair of pants, name = a, at = {(0,0 )}];
\node[pin=180:\LARGE $x$] at (a-incoming boundary 1) {};
\node[pin=0:\LARGE $y$] at (a-incoming boundary 2) {};
\node[pin=180:\LARGE $z$] at (a-outgoing boundary 1) {};
\end{scope}
\end{tikzpicture}
\end{center}
A diagram such as the one above can be used in two ways; it can represent a known element $(x,y,z) \in \mu$, or it can be an assertion that $(x,y,z) \in X^3$ is actually an element of $\mu$. In the latter case, we will say that the diagram is \emph{valid}.

The composition law in $\rel$ has a natural graphical interpretation, which is most easily described with an example. Consider the composition $\delta \circ \mu$, which is a subset of $X^4$. Then $(x_1,x_2,x_3,x_4) \in X^4$ is an element of $\delta \circ \mu$ if and only if there exists $y \in X$ such that $(x_1,x_2,y) \in \mu$ and $(y,x_3,x_4) \in \delta$. Graphically,
\begin{center}
\begin{tikzpicture}[scale=0.5, transform shape]
\begin{scope}[tqft/every boundary component/.style = {draw},tqft/cobordism edge/.style = {draw}]

\pic[tqft/reverse pair of pants, name =a, at= { (0,0)}]; 

\pic[tqft/pair of pants, anchor=incoming boundary 1, name = b, at = (a-outgoing boundary 1)];
\node[pin=180:\LARGE $x_1$] at (a-incoming boundary 1) {};
\node[pin=0:\LARGE $x_2$] at (a-incoming boundary 2) {};
\node[pin=180:\LARGE $x_3$] at (b-outgoing boundary 1) {};
\node[pin=0:\LARGE $x_4$] at (b-outgoing boundary 2) {};
\end{scope}
\end{tikzpicture}
\end{center}
is valid if and only if there exists $y \in X$ such that
\begin{center}
\begin{tikzpicture}[scale=0.5, transform shape]
\begin{scope}[tqft/every boundary component/.style = {draw},tqft/cobordism edge/.style = {draw}]

\pic[tqft/reverse pair of pants, name = a, at = {(0,0)}];
\node[pin=180:\LARGE $x_1$] at (a-incoming boundary 1) {};
\node[pin=0:\LARGE $x_2$] at (a-incoming boundary 2) {};
\node[pin=180:\LARGE $y$] at (a-outgoing boundary 1){};

\pic[tqft/pair of pants, name = b, at = {(6,0)}];
\node[pin=180:\LARGE $y$] at (b-incoming boundary 1){};
\node[pin=180:\LARGE $x_3$] at (b-outgoing boundary 1) {};
\node[pin=0:\LARGE $x_4$] at (b-outgoing boundary 2) {};
\end{scope}
\end{tikzpicture}
\end{center}
are both valid.

%=========================
\section{Basic Properties of Frobenius objects in $\rel$}

There are several different equivalent ways to define a Frobenius object. As a result, there are various properties and structures that are only implicit in Definition \ref{def1} but that are important for understanding Frobenius objects. In this section, we review some of these properties. 

While many of the results here hold for Frobenius objects in any category (see \cite{kock-book}), our exposition emphasizes the implications for Frobenius objects in $\rel$.

\subsection{Nondegeneracy}

Let $X$ be a set. Consider a binary relation $\alpha \subseteq X \times X$, viewed as a morphism $X \times X \relto \{\bullet\}$ in $\rel$. We can depict $\alpha$ as follows:

\begin{center}
\begin{tikzpicture}[scale=0.6, transform shape]
\begin{scope}[tqft/every boundary component/.style = {draw},tqft/cobordism edge/.style = {draw}]
\pic[tqft, incoming boundary components =2, outgoing boundary components =0, at = {(0,0)}];

\path[draw, dotted, ->] (3.2, 0) node[above]{$X \times X$} -- (3.2,-0.3) node[right] {$\alpha$} -- (3.2, - .8) node[below] {$\{\bullet\}$}; 
\end{scope}
\end{tikzpicture}
\end{center}

\begin{definition} 
A binary relation $\alpha$ is \emph{nondegenerate} if there exists $\beta: \{\bullet\} \relto X \times X$ such that $(\alpha \times \idx) \circ (\idx \times \beta) = (\idx \times \alpha) \circ (\beta \times \idx) = \idx$. This equation is known as the \emph{snake identity}, since it can be depicted as follows:
\end{definition}

\begin{center}
\begin{tikzpicture}[scale = 0.5, transform shape]
\begin{scope}[tqft/every boundary component/.style = {draw},tqft/cobordism edge/.style = {draw}]
\pic[tqft, incoming boundary components =0, outgoing boundary components =2, name = a, at = {(-0.2,0)}];

\pic[tqft, incoming boundary components =2, outgoing boundary components =0, anchor = incoming boundary 2, name = b, at = (a-outgoing boundary 1)];

\pic[tqft/cylinder, anchor = outgoing boundary 1, at= (b-incoming boundary 1)];

\pic[tqft/cylinder, anchor = incoming boundary 1, at= (a-outgoing boundary 2)];

\node[text width = 1.3cm] at (4,-2) {\Huge $=$};

\pic[tqft/cylinder, name = c, at = {(5.5,0)}];
\pic[tqft/cylinder, anchor = incoming boundary 1, at = (c-outgoing boundary 1)];

\node[text width = 1.3cm] at (8,-2) {\Huge $=$};

\pic[tqft, incoming boundary components =0, outgoing boundary components =2, name = d, at = {(9.5,0)}];

\pic[tqft/cylinder, anchor = incoming boundary 1, at = (d-outgoing boundary 1)];

\pic[tqft, incoming boundary components =2, outgoing boundary components =0, anchor = incoming boundary 1, name = e, at = (d-outgoing boundary 2)];

\pic[tqft/cylinder, anchor = outgoing boundary 1, at = (e-incoming boundary 2)];
\end{scope}
\end{tikzpicture}
\end{center}

\begin{lemma}\label{lemma:nondegenerate}
A binary relation $\alpha$ is nondegenerate if and only if there exists a bijective map $\hat{\alpha}: X \to X$ such that $\alpha = \{(x,\hat{\alpha}(x)) \suchthat x \in X\}$. In this case, the associated relation $\beta$ is unique and given by $\beta = \{(\hat{\alpha}(x),x) \suchthat x \in X\}$.
\end{lemma}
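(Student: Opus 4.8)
The plan is to unwind the two snake identities into elementary relational statements and then recognize them as an invertibility condition in $\rel$. Carrying out the products and the composition law, and using the identifications $X \times \{\bullet\} \cong X \cong \{\bullet\} \times X$, I would compute
\[(\alpha \times \idx) \circ (\idx \times \beta) = \{(x,x') \suchthat \exists\, m,\ (x,m) \in \alpha,\ (m,x') \in \beta\},\]
\[(\idx \times \alpha) \circ (\beta \times \idx) = \{(x,x') \suchthat \exists\, m,\ (x',m) \in \beta,\ (m,x) \in \alpha\};\]
both can be read off directly from the snake diagram. Regarding $\alpha$ and $\beta$ now as morphisms $X \relto X$ (rather than as a pairing $X \times X \relto \{\bullet\}$ and its companion), the first set is exactly $\beta \circ \alpha$, while the second is the converse of $\alpha \circ \beta$. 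Since $\idx = \Delta(X)$ is symmetric, the snake identities are therefore equivalent to the single pair of equations $\beta \circ \alpha = \idx = \alpha \circ \beta$; that is, $\alpha$ is an isomorphism $X \relto X$ with two-sided inverse $\beta$. This translation is where essentially all of the bookkeeping lives, and it is the step I expect to be the main (if routine) obstacle.

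It then remains to show that the isomorphisms of $\rel$ are precisely the graphs of bijections. First I would check that $\alpha$ is the graph of a function. Totality is immediate: since $\Delta(X) = \beta \circ \alpha$, for each $x$ there is some $m$ with $(x,m) \in \alpha$; the same reasoning applied to $\alpha \circ \beta$ shows $\beta$ is total as well. For single-valuedness, suppose $(x,y_1), (x,y_2) \in \alpha$. Choosing, by totality of $\beta$, an element $(y_1,w) \in \beta$, the pair $(x,w)$ lies in $\beta \circ \alpha = \Delta(X)$, forcing $w = x$, so $(y_1,x) \in \beta$; then $(y_1,y_2) \in \alpha \circ \beta = \Delta(X)$ gives $y_1 = y_2$. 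Thus $\alpha = \{(x,\hat{\alpha}(x))\}$ for a function $\hat{\alpha} \colon X \to X$, and symmetrically $\beta = \{(y,\hat{\beta}(y))\}$. The equations $\beta \circ \alpha = \idx$ and $\alpha \circ \beta = \idx$ now read $\hat{\beta} \circ \hat{\alpha} = \id_X$ and $\hat{\alpha} \circ \hat{\beta} = \id_X$, so $\hat{\alpha}$ is a bijection with inverse $\hat{\beta}$. Substituting $y = \hat{\alpha}(x)$ then yields $\beta = \{(\hat{\alpha}(x),x) \suchthat x \in X\}$.

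Uniqueness of $\beta$ is then free: a two-sided inverse in any category is unique, so $\beta$ is determined by $\alpha$. Finally, for the converse direction I would simply reverse the computation: given a bijection $\hat{\alpha}$ with $\alpha = \{(x,\hat{\alpha}(x))\}$, setting $\beta = \{(\hat{\alpha}(x),x)\}$ gives $\beta \circ \alpha = \alpha \circ \beta = \Delta(X)$ by direct inspection, and retranslating through the first paragraph recovers the snake identities, so $\alpha$ is nondegenerate.
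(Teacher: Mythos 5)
Your proof is correct, and it is organized differently from the paper's. You first translate the two snake identities into the pair of equations $\beta \circ \alpha = \Delta(X) = \alpha \circ \beta$, viewing $\alpha$ and $\beta$ as endomorphisms $X \relto X$, and then invoke (and prove) the fact that the isomorphisms of $\rel$ are exactly the graphs of bijections. The translation step is carried out correctly, including the one genuine subtlety: the second snake composite unwinds to the \emph{converse} of $\alpha \circ \beta$, which you rightly note is harmless because $\Delta(X)$ is symmetric. The paper instead works directly with the two sets $A = \{(x,y) \suchthat \exists z,\ (x,z)\in\alpha,\ (z,y)\in\beta\}$ and $B = \{(x,y) \suchthat \exists z,\ (z,x)\in\alpha,\ (y,z)\in\beta\}$, both equal to $\Delta(X)$, and extracts totality, single-valuedness, injectivity, and surjectivity of $\hat{\alpha}$ one at a time by elementwise arguments, finishing with a separate containment argument to pin down $\beta$. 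The element-level manipulations are essentially the same in both treatments (your single-valuedness argument is the paper's uniqueness argument in different notation), but your factorization through ``$\alpha$ is invertible in $\rel$'' buys you the uniqueness of $\beta$ for free (two-sided inverses are unique in any category) and collapses injectivity and surjectivity of $\hat{\alpha}$ into the existence of a functional two-sided inverse $\hat{\beta}$; the cost is the extra bookkeeping in the initial translation, which you correctly identify as the main point of friction.
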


\begin{proof}
The $(\impliedby)$ direction is immediate. We will prove the statement in the $(\implies)$ direction. 

Suppose that $\alpha$ is nondegenerate with associated relation $\beta$. Let $A,B \subseteq X^2$ be defined as
\begin{align*}
    A &= \{(x,y) \suchthat \exists z \in X \mbox{ such that } (x,z) \in \alpha, (z,y) \in \beta\}, \\
    B &= \{(x,y) \suchthat \exists z \in X \mbox{ such that } (z,x) \in \alpha, (y,z) \in \beta\}.
\end{align*}
By the snake identity, $A = B = \Delta (X)$, where $\Delta(X) \subset X \times X$ is the diagonal subset.

Since $\Delta(X) \subseteq A$, we have that for each $x \in X$, there exists a $z \in X$ such that $(x,z) \in \alpha$ and $(z,x) \in \beta$. Temporarily fix such $x$ and $z$. If $z' \in X$ is such that $(x,z') \in \alpha$, then $(z',z) \in B \subseteq \Delta(X)$, so $z=z'$. This shows that for every $x \in X$, there is a unique $z \in X$ such that $(x,z) \in \alpha$. This gives us a well-defined map $\hat{\alpha}: X \to X$ such that $\alpha = \{(x,\hat{\alpha}(x)) \suchthat x \in X\}$. We have also seen that $(\hat{\alpha}(x),x) \in \beta$ for all $x\in X$.

Assume that $\hat{\alpha}(x) = \hat{\alpha}(x')$ for $x,x' \in X$. Then $(x,x') \in A \subseteq \Delta(X)$, so $x = x'$. This shows that $\hat{\alpha}$ is one-to-one.

Let $z \in X$. Then $(z,z) \in \Delta(X) \subseteq B$, so there exists $x \in X$ such that $(x,z) \in \alpha$ and $(z,x) \in \beta$. This shows that $\hat{\alpha}$ is onto.

It remains to show that $\beta \subseteq \{(\hat{\alpha}(x),x) \suchthat x \in X\}$. Suppose that $(z,x) \in \beta$. Then, since $(x, \hat{\alpha}(x)) \in \alpha$, we have that $(\hat{\alpha}(x), z) \in B \subseteq \Delta(X)$, and therefore $z = \hat{\alpha}(x)$.
\end{proof}

\begin{lemma}\label{lemma:frobnondegenerate}
Suppose that $(X,\epsilon,\eta,\delta,\mu)$ is a Frobenius object in $\rel$, and set $\alpha := \epsilon \circ \mu$. Then $\alpha$ is nondegenerate.
\end{lemma}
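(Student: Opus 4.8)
The plan is to prove nondegeneracy directly, by exhibiting an explicit relation $\beta$ that satisfies the snake identity together with $\alpha = \epsilon \circ \mu$. The natural candidate is the ``copairing'' $\beta := \delta \circ \eta \colon \{\bullet\} \relto X \times X$, which is the graphical mirror of $\alpha$: whereas $\alpha$ is the cap obtained by capping off the multiplication with the counit, $\beta$ is the cup obtained by opening the unit with the comultiplication. The snake identity should then be precisely the statement that a cup followed by a cap straightens out, and this should be forced by the Frobenius, unit, and counit axioms.

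To verify the first snake identity $(\alpha \times \idx) \circ (\idx \times \beta) = \idx$, I would first substitute the definitions of $\alpha$ and $\beta$ and use functoriality of the monoidal product (the interchange law) to rewrite the composite as $(\epsilon \times \idx) \circ (\mu \times \idx) \circ (\idx \times \delta) \circ (\idx \times \eta)$. The key observation is that the middle composite $(\mu \times \idx) \circ (\idx \times \delta)$ is exactly one of the three expressions appearing in the Frobenius axiom (\textbf{F}), so it may be replaced by $\delta \circ \mu$. This produces $(\epsilon \times \idx) \circ \delta \circ \mu \circ (\idx \times \eta)$, at which point the unit axiom (\textbf{U}) collapses $\mu \circ (\idx \times \eta)$ to $\idx$ and the counit axiom (\textbf{C}) collapses $(\epsilon \times \idx) \circ \delta$ to $\idx$, leaving $\idx$ as required.

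The second snake identity $(\idx \times \alpha) \circ (\beta \times \idx) = \idx$ follows by the mirror-image computation: after expanding via the interchange law, the middle composite is $(\idx \times \mu) \circ (\delta \times \idx)$, which is the other expression in (\textbf{F}) and again equals $\delta \circ \mu$; the opposite halves of (\textbf{U}) and (\textbf{C}) then finish the job. Having checked both identities, $\beta = \delta \circ \eta$ witnesses the nondegeneracy of $\alpha$ by definition.

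Since the argument is purely formal, using only functoriality of $\times$ together with the three defining axioms, it is really an instance of the standard fact that every Frobenius object is self-dual and would go through verbatim in any monoidal category. Consequently I expect no genuine obstacle here; the only points requiring care are the bookkeeping of the interchange law and keeping track of which of the two symmetric halves of each axiom is invoked at each step.
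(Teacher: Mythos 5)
Your proposal is correct and follows essentially the same route as the paper: the paper likewise takes $\beta = \delta \circ \eta$ and verifies both snake identities by applying Axiom (\textbf{F}) to the middle composite and then collapsing with (\textbf{U}) and (\textbf{C}), the only difference being that the paper carries out the computation in graphical calculus rather than algebraically. There are no gaps.
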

\begin{proof}
Set $\beta := \delta \circ \eta$.  
Then, using Axioms (\textbf{F}), (\textbf{C}), and (\textbf{U}), we have
\begin{center}
\begin{tikzpicture}[scale=0.5, transform shape]
\begin{scope}[tqft/every boundary component/.style = {draw},tqft/cobordism edge/.style = {draw}]

\pic[tqft/reverse pair of pants, name = a, at = {(0,0)}];
\pic[tqft/cylinder,anchor=outgoing boundary 1, at = (a-incoming boundary 1)];
\pic[tqft/pair of pants,anchor=outgoing boundary 1, name = b, at = (a-incoming boundary 2)];
\pic[tqft/cup,anchor=incoming boundary 1, at = (a-outgoing boundary 1)];
\pic[tqft/cap,anchor=outgoing boundary 1, at= (b-incoming boundary 1)];
\pic[tqft/cylinder,anchor=incoming boundary 1, at= (b-outgoing boundary 2)];

\node[text width = 1.3cm] at (6,0) {\Huge $=$};

\pic[tqft/reverse pair of pants, name =c, at = {(7,2)}];
\pic[tqft/cap,anchor=outgoing boundary 1, at=(c-incoming boundary 2)];
\pic[tqft/pair of pants, anchor=incoming boundary 1, name = d, at = (c-outgoing boundary 1)];
\pic[tqft/cup,anchor=incoming boundary 1, at =(d-outgoing boundary 1)];

\node[text width = 1.3cm] at (10.8,0) {\Huge $=$};

\pic[tqft/cylinder, name = e, at={(12,2)}];
\pic[tqft/cylinder, anchor = incoming boundary 1, at = (e-outgoing boundary 1)];

\node[text width = 1.3cm] at (14,0) {\Huge $=$};

\pic[tqft/reverse pair of pants, name = f, at = {(15,2)}]; 
\pic[tqft/cap,anchor=outgoing boundary 1, at = ( f-incoming boundary 1)];
\pic[tqft/pair of pants, anchor=incoming boundary 1, name = g, at = (f-outgoing boundary 1)];
\pic[tqft/cup,anchor=incoming boundary 1, at = (g-outgoing boundary 2)];

\node[text width = 1.3cm] at (18.7,0) {\Huge $=$};

\pic[tqft/pair of pants, name = h, at = {(21,2)}];
\pic[tqft/cylinder,anchor = incoming boundary 1, at = (h-outgoing boundary 1)];
\pic[tqft/reverse pair of pants,anchor=incoming boundary 1, name = i, at = (h-outgoing boundary 2)];
\pic[tqft/cap, anchor = outgoing boundary 1, at = (h-incoming  boundary 1)];
\pic[tqft/cylinder, anchor = outgoing boundary 1, at = (i-incoming boundary 2)];
\pic[tqft/cup, anchor = incoming boundary 1, at = (i-outgoing boundary 1)];

\end{scope}
\end{tikzpicture}
\end{center}
so the snake identity holds.
\end{proof}

As a consequence of Lemmas \ref{lemma:nondegenerate} and \ref{lemma:frobnondegenerate}, we have the following result.

\begin{prop}\label{prop:alpha}
Suppose that $(X,\epsilon,\eta,\delta,\mu)$ is a Frobenius object in $\rel$, and let $\alpha = \epsilon \circ \mu$ and $\beta = \delta \circ \eta$. Then there is a bijection $\hat{\alpha}: X \to X$ such that $\alpha$ and $\beta$ are of the form specified in Lemma \ref{lemma:nondegenerate}.
\end{prop}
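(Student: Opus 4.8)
The plan is to obtain this proposition as a direct synthesis of the two preceding lemmas, which together already contain all the substantive content. First I would invoke Lemma~\ref{lemma:frobnondegenerate} to conclude that the relation $\alpha = \epsilon \circ \mu$ is nondegenerate; this is the part requiring genuine work, but it has already been carried out by the diagrammatic argument establishing the snake identity, so nothing new is needed here.

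Next I would feed this nondegenerate $\alpha$ into the $(\implies)$ direction of Lemma~\ref{lemma:nondegenerate}. This immediately produces a bijection $\hat{\alpha}: X \to X$ with $\alpha = \{(x,\hat{\alpha}(x)) \suchthat x \in X\}$, and it also characterizes the associated relation as the unique subset of the form $\{(\hat{\alpha}(x),x) \suchthat x \in X\}$.

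The only point that must be handled with care is to match this abstract associated relation against the concrete $\beta = \delta \circ \eta$ appearing in the statement. Reading off the proof of Lemma~\ref{lemma:frobnondegenerate}, the relation used there to witness nondegeneracy of $\alpha$ is exactly $\beta = \delta \circ \eta$. Because Lemma~\ref{lemma:nondegenerate} asserts that the associated relation is unique, $\delta \circ \eta$ must coincide with $\{(\hat{\alpha}(x),x) \suchthat x \in X\}$, and hence both $\alpha$ and $\beta$ take the forms specified in Lemma~\ref{lemma:nondegenerate}. I expect no real obstacle: this uniqueness observation is the crux of the argument, but it is a one-line bookkeeping step, since the entire difficulty has been front-loaded into the two lemmas.
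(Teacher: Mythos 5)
Your proposal is correct and follows exactly the paper's route: the paper states Proposition \ref{prop:alpha} as an immediate consequence of Lemmas \ref{lemma:nondegenerate} and \ref{lemma:frobnondegenerate}, with the witness $\beta = \delta \circ \eta$ from the proof of Lemma \ref{lemma:frobnondegenerate} pinned down by the uniqueness clause of Lemma \ref{lemma:nondegenerate}, just as you describe.
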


%====
\subsection{Relations involving $\hat{\alpha}$}

Let $(X,\epsilon,\eta,\delta,\mu)$ be a Frobenius object in $\rel$. The bijection $\hat{\alpha}$ provides a close relationship between the unit $\eta$ and the counit $\epsilon$, as well as between the multiplication $\mu$ and comultiplication $\delta$.

\begin{prop}\label{prop:alphaunit}
For $x \in X$,
\begin{enumerate}
    \item $x \in \eta$ if and only if $\hat{\alpha}(x) \in \epsilon$.
    \item $x \in \epsilon$ if and only if $\hat{\alpha}(x) \in \eta$.
\end{enumerate}
\end{prop}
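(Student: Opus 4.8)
The plan is to derive both biconditionals from the Unit Axiom (\textbf{U}) together with the explicit description of $\alpha = \epsilon \circ \mu = \{(x,\hat{\alpha}(x)) \suchthat x \in X\}$ furnished by Proposition \ref{prop:alpha}; the Frobenius Axiom is not needed. The key observation is that post-composing each half of (\textbf{U}) with $\epsilon$ collapses the multiplication and returns $\epsilon$ itself. Indeed, since $\mu \circ (\eta \times \idx) = \idx = \mu \circ (\idx \times \eta)$ and $\alpha = \epsilon \circ \mu$, associativity of composition gives
\[ \alpha \circ (\eta \times \idx) = \epsilon \circ \mu \circ (\eta \times \idx) = \epsilon = \epsilon \circ \mu \circ (\idx \times \eta) = \alpha \circ (\idx \times \eta), \]
viewed as subsets of $X$ under the identifications $\{\bullet\} \times X \cong X \cong X \times \{\bullet\}$.

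Next I would unwind each composite using the composition law in $\rel$ and the explicit form of $\alpha$. For the first, an element $x \in X$ lies in $\alpha \circ (\eta \times \idx)$ iff there is an intermediate pair $(a,x)$ with $a \in \eta$ and $(a,x) \in \alpha$, the latter meaning $x = \hat{\alpha}(a)$; hence $\alpha \circ (\eta \times \idx) = \{\hat{\alpha}(a) \suchthat a \in \eta\} = \hat{\alpha}(\eta)$, and comparison with the display above yields $\epsilon = \hat{\alpha}(\eta)$. For the second, $x \in \alpha \circ (\idx \times \eta)$ iff there is $a \in \eta$ with $(x,a) \in \alpha$, i.e.\ $\hat{\alpha}(x) = a \in \eta$; hence $\alpha \circ (\idx \times \eta) = \{x \suchthat \hat{\alpha}(x) \in \eta\}$, so $\epsilon = \{x \suchthat \hat{\alpha}(x) \in \eta\}$.

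Finally I would translate these set equalities into the stated element-wise biconditionals using the bijectivity of $\hat{\alpha}$ (again from Proposition \ref{prop:alpha}). From $\epsilon = \hat{\alpha}(\eta)$ and the fact that $\hat{\alpha}$ is a bijection, $\hat{\alpha}(x) \in \epsilon$ iff $x \in \eta$, which is statement (1); and $\epsilon = \{x \suchthat \hat{\alpha}(x) \in \eta\}$ reads directly as $x \in \epsilon$ iff $\hat{\alpha}(x) \in \eta$, which is statement (2). The only delicate point is bookkeeping: one must track the composition of relations and the $\{\bullet\}$-identifications carefully, in particular keeping the order of the pair in $\alpha$ straight so that $(a,x) \in \alpha$ reads as $x = \hat{\alpha}(a)$ and not the reverse. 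As a cross-check, the same two statements can be obtained dually from the Counit Axiom (\textbf{C}) applied to $\beta = \delta \circ \eta = \{(\hat{\alpha}(x),x) \suchthat x \in X\}$, via $(\epsilon \times \idx) \circ \beta = \eta = (\idx \times \epsilon) \circ \beta$.
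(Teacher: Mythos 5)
Your proof is correct and follows essentially the same route as the paper's: both arguments rest on Axiom (\textbf{U}) together with the explicit form $\alpha = \epsilon \circ \mu = \{(x,\hat{\alpha}(x)) \suchthat x \in X\}$ from Proposition \ref{prop:alpha}. The only difference is presentational --- you package the argument as the single relation identity $\alpha \circ (\eta \times \idx) = \epsilon = \alpha \circ (\idx \times \eta)$ and read off all four implications at once, whereas the paper chases one element through the corresponding diagram and declares the remaining cases similar.
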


\begin{proof}
Suppose $x \in \eta$. By Proposition \ref{prop:alpha}, we have that
\begin{center}
\begin{tikzpicture}[scale=0.5, transform shape]
\begin{scope}[tqft/every boundary component/.style = {draw},tqft/cobordism edge/.style = {draw}]
\pic[tqft/cap, name = a, at = {(0,2)}];

\pic[tqft/reverse pair of pants,anchor=incoming boundary 1, name = b, at = (a-outgoing boundary 1)];

\pic[tqft/cup,anchor=incoming boundary 1, at=(b-outgoing boundary 1)]; 

\node[pin=180:\LARGE $y$] at (b-outgoing boundary 1) {};
\node[pin=180:\LARGE $x$] at (a-outgoing boundary 1) {};
\node[pin=0:\LARGE $\hat{\alpha}(x)$] at (b-incoming boundary 2) {};
\end{scope}
\end{tikzpicture}
\end{center}
is valid for some $y \in \epsilon$. But by Axiom (\textbf{U}), it must be that $\hat{\alpha}(x) = y \in \epsilon$. The other parts of the proof are similar.
\end{proof}

\begin{lemma}\label{lemma:comultiplication}
The following identities hold:
\begin{align}
(\idx \times \alpha) \circ (\delta \times \idx)  & = \mu \label{md1}\\
(\alpha\times \idx) \circ (\idx \times \delta) & = \mu \label{md2}
\end{align}
\end{lemma}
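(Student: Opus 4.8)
The plan is to prove both identities by unfolding the definition $\alpha = \epsilon \circ \mu$ and then reducing to a short chain of axiom applications; the two identities are mirror images of one another, so I would give the argument for \eqref{md1} in full and obtain \eqref{md2} by the symmetric computation. Graphical calculus is the natural language here, since each rewriting step corresponds to a local surgery on the cobordism picture, but the argument is clean enough to state purely algebraically.

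For \eqref{md1}, the first step is to substitute $\alpha = \epsilon \circ \mu$ and use bifunctoriality of the monoidal product (the interchange law in $\rel$) to factor $\idx \times \alpha = (\idx \times \epsilon) \circ (\idx \times \mu)$. This turns the left-hand side of \eqref{md1} into $(\idx \times \epsilon) \circ (\idx \times \mu) \circ (\delta \times \idx)$. The middle portion $(\idx \times \mu) \circ (\delta \times \idx)$ is exactly the left expression in the Frobenius Axiom (\textbf{F}), so I would rewrite it as $\delta \circ \mu$, leaving $(\idx \times \epsilon) \circ \delta \circ \mu$. Finally, the Counit Axiom (\textbf{C}) gives $(\idx \times \epsilon) \circ \delta = \idx$, and the chain collapses to $\mu$, as desired.

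For \eqref{md2}, the same recipe applies using the opposite halves of the axioms: factor $\alpha \times \idx = (\epsilon \times \idx) \circ (\mu \times \idx)$, recognize $(\mu \times \idx) \circ (\idx \times \delta) = \delta \circ \mu$ from the right-hand expression in (\textbf{F}), and then apply $(\epsilon \times \idx) \circ \delta = \idx$ from the other half of (\textbf{C}). I do not anticipate a genuine mathematical obstacle, since the content is entirely axiomatic. The only point requiring care is the bookkeeping of the monoidal structure: one must ensure that the identity strands in $\idx \times \alpha$ and $\alpha \times \idx$ occupy the correct tensor slots, so that after factoring, the composites match the Frobenius and counit axioms verbatim, with no stray associators or transpositions. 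This is immediate in the graphical picture, which is why I would present the computation as a sequence of cobordism diagrams rather than a string of compositions.
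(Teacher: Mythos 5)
Your proof is correct and is essentially identical to the paper's: the paper's diagrammatic chain is precisely $(\idx \times \alpha) \circ (\delta \times \idx) = (\idx \times \epsilon) \circ (\idx \times \mu) \circ (\delta \times \idx) = (\idx \times \epsilon) \circ \delta \circ \mu = \mu$, using (\textbf{F}) and then (\textbf{C}), with \eqref{md2} obtained by the mirror-image computation. You have matched the correct halves of each axiom, so there is nothing to add.
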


\begin{proof}
Using Axioms (\textbf{F}) and (\textbf{C}), we have 

\begin{center}
\begin{tikzpicture}[scale=0.5, transform shape]
\begin{scope}[tqft/every boundary component/.style = {draw},tqft/cobordism edge/.style = {draw}]

\pic[tqft/pair of pants, name =a, at = {(0,2)}];
\pic[tqft/cylinder, at = (a-outgoing boundary 1)];
\pic[tqft/reverse pair of pants,anchor=incoming boundary 1,name = b, at = (a-outgoing boundary 2)];
\pic[tqft/cylinder, anchor  = outgoing boundary 1, at = (b-incoming boundary 2)];
\pic[tqft/cup,anchor=incoming boundary 1, at = (b-outgoing boundary 1)];

\node[text width = 1.3cm] at (5,0) {\Huge $=$};

\pic[tqft/reverse pair of pants, name = c, at = {(6.3,2)}];
\pic[tqft/pair of pants, anchor=incoming boundary 1, name =d, at = (c-outgoing boundary 1)];
\pic[tqft/cup,anchor=incoming boundary 1, at = (d-outgoing boundary 2)];

\node[text width = 1.3cm] at (10.6,0) {\Huge $=$};

\pic[tqft/reverse pair of pants, at = {(12,1)}];
\end{scope}
\end{tikzpicture}
\end{center}
which proves \eqref{md1}. The proof of \eqref{md2} is similar.
\end{proof}

\begin{lemma} \label{md}
For $(x,y,z) \in X^3$, the following are equivalent:
\begin{enumerate}
    \item $(x,y,z) \in \mu$,
    \item $(y,\hat{\alpha}(x),z) \in \delta$,
    \item $(x,z,\hat{\alpha}^{-1}(y)) \in \delta$
\end{enumerate}
\end{lemma}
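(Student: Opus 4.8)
The plan is to derive all three equivalences directly from Lemma~\ref{lemma:comultiplication}, by unwinding the composition law in $\rel$ and using the explicit description of $\alpha$ supplied by Proposition~\ref{prop:alpha}. Recall from that proposition that $(a,b) \in \alpha$ if and only if $b = \hat{\alpha}(a)$, and that $\hat{\alpha}$ is a bijection, so we may freely apply $\hat{\alpha}^{-1}$ when convenient. Since it suffices to prove (1)$\iff$(2) and (1)$\iff$(3), and since the identities in Lemma~\ref{lemma:comultiplication} are equalities of relations, each equivalence will drop out in both directions at once.

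First I would establish (1)$\iff$(2) using identity \eqref{md2}, namely $\mu = (\alpha \times \idx) \circ (\idx \times \delta)$. Unwinding the right-hand side: the relation $\idx \times \delta$ feeds the second input $y$ into $\delta$ while the first input $x$ survives through the identity, and then $\alpha \times \idx$ pairs, via $\alpha$, the surviving $x$ with the first output of $\delta$ while passing the second output of $\delta$ to the output $z$. Carrying out the factor bookkeeping, membership $(x,y,z) \in \mu$ holds exactly when there is an element of $\delta$ whose input is $y$, whose first output equals $\hat{\alpha}(x)$, and whose second output equals $z$; that is, $(x,y,z) \in \mu$ if and only if $(y,\hat{\alpha}(x),z) \in \delta$.

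The equivalence (1)$\iff$(3) is handled symmetrically using identity \eqref{md1}, $\mu = (\idx \times \alpha) \circ (\delta \times \idx)$. Now $x$ is fed into $\delta$ (producing two outputs) while $y$ survives through the identity; then $\idx \times \alpha$ sends the first output of $\delta$ to $z$ and pairs, via $\alpha$, the second output of $\delta$ with the surviving copy of $y$. Tracking factors yields $(x,y,z) \in \mu$ if and only if $(x,z,b) \in \delta$ with $\hat{\alpha}(b) = y$, i.e.\ $b = \hat{\alpha}^{-1}(y)$, which is exactly statement (3). This is the one place where bijectivity of $\hat{\alpha}$ is invoked, in order to pass from $y = \hat{\alpha}(b)$ to $b = \hat{\alpha}^{-1}(y)$.

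I expect no substantive obstacle: the mathematical content is already contained in Lemma~\ref{lemma:comultiplication}, and what remains is the routine but error-prone task of correctly identifying which tensor factors are glued by the product relations $\idx \times \alpha$, $\alpha \times \idx$, $\delta \times \idx$, $\idx \times \delta$ and by the composition. The main care needed is to keep the roles of the two inputs of $\mu$ straight when passing through $\delta \times \idx$ versus $\idx \times \delta$, since interchanging them is precisely the difference between statements (2) and (3).
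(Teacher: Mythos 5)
Your proposal is correct and follows essentially the same route as the paper: both derive (1)$\iff$(3) from identity \eqref{md1} and (1)$\iff$(2) from identity \eqref{md2}, using the description of $\alpha$ from Proposition \ref{prop:alpha}; the paper merely presents the factor bookkeeping graphically rather than algebraically. Your tracking of which tensor factors are glued is accurate in both cases, so no gap remains.
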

\begin{proof}
$(1 \iff 3)$ By \eqref{md1}, we have that $(x,y,z) \in \mu$ if and only if $(x,y,z) \in (\idx \times \alpha) \circ(\delta \times \idx)$. Graphically, 
\begin{center}
\begin{tikzpicture}[scale=0.5, transform shape]
\begin{scope}[tqft/every boundary component/.style = {draw},tqft/cobordism edge/.style = {draw}]

\pic[tqft/reverse pair of pants, name =c, at = {(9.2,1)}];

\node[pin=180:\LARGE $x$] at (c-incoming boundary 1) {};
\node[pin=0:\LARGE $y$] at (c-incoming boundary 2) {};
\node[pin=180:\LARGE $z$] at (c-outgoing boundary 1) {};
\end{scope}
\end{tikzpicture}
\end{center}
is valid if and only if
\begin{center}
\begin{tikzpicture}[scale=0.5, transform shape]
\begin{scope}[tqft/every boundary component/.style = {draw},tqft/cobordism edge/.style = {draw}]
\pic[tqft/pair of pants, name = a, at = {(0,2)}];

\pic[tqft/cylinder,anchor=incoming boundary 1, at = (a-outgoing boundary 1)];

\pic[tqft/reverse pair of pants,anchor=incoming boundary 1, name = b, at = (a-outgoing boundary 2)];

\pic[tqft/cylinder,anchor=outgoing boundary 1, at = (b-incoming boundary 2)];

\pic[tqft/cup,anchor=incoming boundary 1, at=(b-outgoing boundary 1)]; 

\node[pin=180:\LARGE $x$] at (a-incoming boundary 1) {};
\node[pin=180:\LARGE $z$] at (a-outgoing boundary 1) {};
\node[pin=0:\LARGE $y$] at (b-incoming boundary 2) {};
\end{scope}
\end{tikzpicture}
\end{center}
is valid. By Proposition \ref{prop:alpha}, this is the case if and only if
\begin{center}
\begin{tikzpicture}[scale=0.5, transform shape]
\begin{scope}[tqft/every boundary component/.style = {draw},tqft/cobordism edge/.style = {draw}]

\pic[tqft/pair of pants, name = a, at = {(0,2)}];

\node[pin=180:\LARGE $x$] at (a-incoming boundary 1) {};
\node[pin=180:\LARGE $z$] at (a-outgoing boundary 1) {};
\node[pin=0:\LARGE $\hat{\alpha}^{-1}(y)$] at (a-outgoing boundary 2) {};
\end{scope}
\end{tikzpicture}
\end{center}
is valid.
$(1 \iff 2)$ can be similarly proved using \eqref{md2}.
\end{proof}

\begin{prop}
\label{alpha} Let $(x,y,z) \in X^3$. Then $(x,y,z) \in \mu$ if and only if $(y, \hat{\alpha}(z),\hat{\alpha}(x))\in \mu$.
\end{prop}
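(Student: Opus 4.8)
The plan is to reduce everything to Lemma \ref{md}, which already translates membership in $\mu$ into membership in $\delta$ in two different ways. The key observation is that the permuted triple $(y,\hat{\alpha}(z),\hat{\alpha}(x))$ can be fed into Lemma \ref{md} so that its $\delta$-description matches, after the $\hat{\alpha}$'s collapse, the $\delta$-description of the original triple $(x,y,z)$. So rather than manipulating $\mu$ directly, I would route both sides of the desired biconditional through $\delta$.

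Concretely, I would first apply the equivalence $(1 \iff 2)$ of Lemma \ref{md} to the triple $(x,y,z)$, obtaining
\[
(x,y,z) \in \mu \iff (y,\hat{\alpha}(x),z) \in \delta.
\]
Next I would apply the equivalence $(1 \iff 3)$ of Lemma \ref{md} to the triple $(y,\hat{\alpha}(z),\hat{\alpha}(x))$; here the three entries appearing in the template $(a,b,c)$ of the lemma are played by $y$, $\hat{\alpha}(z)$, and $\hat{\alpha}(x)$, so the conclusion reads
\[
(y,\hat{\alpha}(z),\hat{\alpha}(x)) \in \mu \iff (y,\hat{\alpha}(x),\hat{\alpha}^{-1}(\hat{\alpha}(z))) \in \delta.
\]
Since $\hat{\alpha}$ is a bijection (Proposition \ref{prop:alpha}), the composition $\hat{\alpha}^{-1}\circ\hat{\alpha}$ cancels and the right-hand side becomes $(y,\hat{\alpha}(x),z) \in \delta$. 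Comparing the two displayed equivalences then yields the desired result, since both $\mu$-memberships are equivalent to the single $\delta$-membership $(y,\hat{\alpha}(x),z) \in \delta$.

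The only real care needed is bookkeeping: correctly matching the entries of the permuted triple against the template in each clause of Lemma \ref{md}, and verifying that the $\hat{\alpha}$ and $\hat{\alpha}^{-1}$ that appear collapse as claimed. I expect no genuine obstacle here, since the statement is essentially a hidden symmetry of $\mu$ that becomes transparent once both $\delta$-descriptions supplied by Lemma \ref{md} are written side by side. If a purely graphical argument were preferred, the same chain could be rendered by inserting and then removing an $\hat{\alpha}$-labeled cap/cup pair, but the algebraic route through Lemma \ref{md} is by far the most economical.
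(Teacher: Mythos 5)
Your proof is correct and follows essentially the same route as the paper: both pass through the single $\delta$-membership $(y,\hat{\alpha}(x),z)\in\delta$, using $(1\iff 2)$ of Lemma \ref{md} on $(x,y,z)$ and $(1\iff 3)$ on $(y,\hat{\alpha}(z),\hat{\alpha}(x))$. The bookkeeping you flag does check out exactly as you describe.
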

\begin{proof}
By Lemma \ref{md}, $(x,y,z) \in \mu$ if and only if $(y,\hat{\alpha}(x),z) \in \delta$. Using $(1 \iff 3)$ from Lemma \ref{md}, we see that $(y,\hat{\alpha}(x),z) \in \delta$ if and only if $(y,\hat{\alpha}(z), \hat{\alpha}(x) \in \mu$.
\end{proof}

\begin{remark}\label{remark:rotation}
Graphically, the identity in Proposition \ref{alpha} asserts that
\begin{center}
\begin{tikzpicture}[scale=0.5, transform shape]
\begin{scope}[tqft/every boundary component/.style = {draw},tqft/cobordism edge/.style = {draw}]

\pic[tqft/reverse pair of pants, name = c, at = {(0,0)}];

\node[pin=180:\LARGE $x$] at (c-incoming boundary 1) {};
\node[pin=0:\LARGE $y$] at (c-incoming boundary 2) {};
\node[pin=180:\LARGE $z$] at (c-outgoing boundary 1) {};
\end{scope}
\end{tikzpicture}
\end{center}
is valid if and only if
\begin{center}
\begin{tikzpicture}[scale=0.5, transform shape]
\begin{scope}[tqft/every boundary component/.style = {draw},tqft/cobordism edge/.style = {draw}]

\pic[tqft/reverse pair of pants, name =c, at = {(0,0)}];

\node[pin=180:\LARGE $y$] at (c-incoming boundary 1) {};
\node[pin=0:\LARGE $\hat{\alpha}(z)$] at (c-incoming boundary 2) {};
\node[pin=-180:\LARGE $\hat{\alpha}(x)$] at (c-outgoing boundary 1) {};
\end{scope}
\end{tikzpicture}
\end{center}
is valid. Informally, the bijection $\hat{\alpha}$ provides a way to rotate the boundaries of $\mu$ in the counterclockwise direction. 
\end{remark}

%===
\subsection{Associativity and coassociativity}
Let $X$ be a set.
\begin{definition}
A relation $\mu: X \times X \relto X$ is \emph{associative} if 
\[\mu \circ (\idx \times \mu) = \mu \circ (\mu \times \idx).\]
\end{definition}
\begin{definition}
A relation $\delta: X \relto X \times X$ is \emph{coassociative} if \[(\delta \times \idx) \circ \delta = (\idx \times \delta) \circ \delta.\]
\end{definition}

\begin{lemma}\label{lemma:associativity}
Suppose that $(X,\epsilon,\eta,\delta,\mu)$ is a Frobenius object in $\rel$. Then $\mu$ is associative and $\delta$ is coassociative.
\end{lemma}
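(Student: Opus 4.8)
The plan is to prove associativity of $\mu$ by a direct element-wise argument: I would translate each side of the associativity identity into an existential condition on membership in $\mu$, and then use the dictionary between $\mu$ and $\delta$ from Lemma \ref{md}, the rotation identity of Proposition \ref{alpha}, and the Frobenius axiom (\textbf{F}) to convert one condition into the other. Coassociativity of $\delta$ would then follow by the dual computation.

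Unwinding the composition law in $\rel$, the statement $(x_1,x_2,x_3,w)\in\mu\circ(\mu\times\idx)$ means that there exists $y\in X$ with $(x_1,x_2,y)\in\mu$ and $(y,x_3,w)\in\mu$, while $(x_1,x_2,x_3,w)\in\mu\circ(\idx\times\mu)$ means that there exists $f\in X$ with $(x_2,x_3,f)\in\mu$ and $(x_1,f,w)\in\mu$. First I would rewrite the former: by Proposition \ref{alpha}, $(x_1,x_2,y)\in\mu$ is equivalent to $(x_2,\hat{\alpha}(y),\hat{\alpha}(x_1))\in\mu$, and by Lemma \ref{md} the relation $(y,x_3,w)\in\mu$ is equivalent to $(x_3,\hat{\alpha}(y),w)\in\delta$. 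Setting $u=\hat{\alpha}(y)$ and using that $\hat{\alpha}$ is a bijection, the condition becomes: there exists $u\in X$ with $(x_2,u,\hat{\alpha}(x_1))\in\mu$ and $(x_3,u,w)\in\delta$.

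The point of this rewriting is that these two clauses are exactly the defining conditions for $(x_2,x_3,\hat{\alpha}(x_1),w)\in(\mu\times\idx)\circ(\idx\times\delta)$. The Frobenius axiom (\textbf{F}) identifies this composite with $\delta\circ\mu$, so the condition is equivalent to the existence of $f$ with $(x_2,x_3,f)\in\mu$ and $(f,\hat{\alpha}(x_1),w)\in\delta$. Finally, Lemma \ref{md} converts $(f,\hat{\alpha}(x_1),w)\in\delta$ back into $(x_1,f,w)\in\mu$, which is precisely the condition for $(x_1,x_2,x_3,w)\in\mu\circ(\idx\times\mu)$. Since $\hat{\alpha}$ is a bijection, running this chain of equivalences for all $(x_1,x_2,x_3,w)$ establishes $\mu\circ(\mu\times\idx)=\mu\circ(\idx\times\mu)$.

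For coassociativity I would carry out the mirror-image computation, applying the dictionary of Lemma \ref{md} in the opposite direction (trading $\delta$-memberships for $\mu$-memberships) together with the other half of (\textbf{F}), namely $(\idx\times\mu)\circ(\delta\times\idx)=\delta\circ\mu$; alternatively, coassociativity of $\delta$ can be deduced directly from associativity of $\mu$ via Lemma \ref{md}. I expect the main obstacle to be purely bookkeeping: tracking the $\hat{\alpha}$ and $\hat{\alpha}^{-1}$ shifts introduced at each application of Lemma \ref{md} and Proposition \ref{alpha}, and choosing substitutions for the existentially quantified intermediate variables so that the two clauses match the Frobenius composite exactly. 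No single step is deep, but a misplaced $\hat{\alpha}$ would break the alignment with (\textbf{F}).
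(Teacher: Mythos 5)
Your element-wise argument is correct and is essentially the paper's proof in different notation: the paper substitutes the identity $\mu = (\idx\times\alpha)\circ(\delta\times\idx)$ of Lemma \ref{lemma:comultiplication} (of which your Lemma \ref{md} is the element-wise form) into $\mu\circ(\mu\times\idx)$, applies Axiom (\textbf{F}), and converts back, exactly as you do. The only difference is that the paper carries this out diagrammatically at the level of relations, whereas you track tuples and the attendant $\hat{\alpha}$-shifts explicitly.
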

\begin{proof}
Using Lemma \ref{lemma:comultiplication} and Axiom (\textbf{F}), we have
\begin{center}
\begin{tikzpicture}[scale=0.5, transform shape]
\begin{scope}[tqft/every boundary component/.style = {draw},tqft/cobordism edge/.style = {draw}]

\pic[tqft/reverse pair of pants, name = a, at = {(0,0)}];
\pic[tqft/reverse pair of pants,anchor=outgoing boundary 1, at= (a-incoming boundary 1)];
\pic[tqft/cylinder, anchor = outgoing boundary 1, at= (a-incoming  boundary 2)];

\node[text width= 1.3cm] at (3.5,0) {\Huge $=$};

\pic[tqft/reverse pair of pants, name = b, at = {(8.5,0)}];
\pic[tqft/cylinder, anchor = outgoing boundary 1, at = (b-incoming boundary 2)];
\pic[tqft/pair of pants,anchor=outgoing boundary 2, name =c, at = (b-incoming boundary 1)];
\pic[tqft/reverse pair of pants, anchor = incoming boundary 2, name = d, at = (c-outgoing boundary 1)];
\pic[tqft/cylinder, anchor = outgoing boundary 1, at = (d-incoming boundary 1)];
\pic[tqft/cup, anchor = incoming boundary 1, at = (d-outgoing boundary 1)];

\node[text width= 1.3cm] at (12,0) {\Huge $=$};

\pic[tqft/pair of pants, name = e, at = {(16,2)}];
\pic[tqft/cylinder, anchor = incoming boundary 1,  at=(e-outgoing boundary 2)];
\pic[tqft/reverse pair of pants, anchor = outgoing boundary 1 ,name = f, at = (e-incoming boundary 1)];
\pic[tqft/reverse pair of pants, anchor = incoming boundary 2, name = g, at = (e-outgoing boundary 1)]; 
\pic[tqft/cylinder, anchor = outgoing boundary 1, name = i, at=(g-incoming boundary 1)];
\pic[tqft/cylinder, anchor = outgoing boundary 1, at=(i-incoming boundary 1)];
\pic[tqft/cup, anchor = incoming boundary 1, at = (g-outgoing boundary 1)];

\node[text width= 1.3cm] at (18.5,0) {\Huge $=$};

\pic[tqft/reverse pair of pants, name = h, at = {(19.5,0)}];
\pic[tqft/cylinder, anchor = outgoing boundary 1, at=(h-incoming boundary 1)];
\pic[tqft/reverse pair of pants,anchor=outgoing boundary 1, at = (h-incoming boundary 2)];
\end{scope}
\end{tikzpicture}
\end{center}
This proves associativity of $\mu$. The proof of coassociativity of $\delta$ is similar.
\end{proof}

%====
\subsection{Source and target maps}\label{sub:st}

Let $(X,\epsilon,\eta,\delta,\mu)$ be a Frobenius object in $\rel$. Then the unit relation $\eta: \{\bullet\} \relto X$ is a subset $\eta \subseteq X$, the elements of which can be viewed as the ``units'' of $X$.
\begin{lemma}
\label{uniqueu}
\begin{enumerate}
    \item For every $x \in X$, there is a unique $s(x) \in \eta$ such that $(x,s(x),x) \in \mu$.
    \item For every $x \in X$, there is a unique $t(x) \in \eta$ such that $(t(x),x,x) \in \mu$.
\end{enumerate}
\end{lemma}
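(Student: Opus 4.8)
The plan is to separate existence from uniqueness, and to deduce part (2) from part (1) by the evident left-right symmetry. I begin by unpacking Axiom (\textbf{U}) elementwise. Composing out $\mu \circ (\idx \times \eta) = \idx$ shows that $(x,z) \in \idx$ holds exactly when there is some $u \in \eta$ with $(x,u,z) \in \mu$. Reading this in both directions yields, on the one hand, that for each $x$ there exists $u \in \eta$ with $(x,u,x) \in \mu$ --- the existence half of part (1) --- and, on the other, the \emph{right-unit constraint}: if $(x,u,z) \in \mu$ with $u \in \eta$, then $x = z$. Unpacking the mirror identity $\mu \circ (\eta \times \idx) = \idx$ gives the existence half of part (2) together with the \emph{left-unit constraint}: if $(u,x,z) \in \mu$ with $u \in \eta$, then $x = z$.

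The real content is uniqueness, and the key idea is to use associativity (Lemma \ref{lemma:associativity}) to fuse two rival units into a single product. For part (1), suppose $u, u' \in \eta$ both satisfy $(x,u,x) \in \mu$ and $(x,u',x) \in \mu$. I first note that the quadruple $(x,u,u',x)$ lies in $\mu \circ (\mu \times \idx)$: taking $x$ itself as the intermediate element, the two instances demanded by this composition are exactly $(x,u,x) \in \mu$ and $(x,u',x) \in \mu$. Associativity then places $(x,u,u',x)$ in $\mu \circ (\idx \times \mu)$, so there is some $w \in X$ with $(u,u',w) \in \mu$ and $(x,w,x) \in \mu$. Now the left-unit constraint applied to $(u,u',w)$, with $u \in \eta$ in the first slot, gives $u' = w$, and the right-unit constraint, with $u' \in \eta$ in the second slot, gives $u = w$. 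Hence $u = u'$, which is the uniqueness of $s(x)$.

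Part (2) runs in exact parallel: given $u, u' \in \eta$ with $(u,x,x) \in \mu$ and $(u',x,x) \in \mu$, the quadruple $(u,u',x,x)$ lies in $\mu \circ (\idx \times \mu)$ --- again with $x$ as intermediate element --- hence in $\mu \circ (\mu \times \idx)$, producing $w$ with $(u,u',w) \in \mu$. The same two unit constraints then force $u = w = u'$.

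I expect the only genuine obstacle to be spotting the correct quadruple whose two associativity factorizations collide the rival units into one product $(u,u',w) \in \mu$; once that product is available, the unit constraints close the argument immediately. The entire computation can equally be read off the graphical calculus, where it amounts to sliding one unit cap past the other through the associativity cobordism.
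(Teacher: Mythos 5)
Your proof is correct and follows essentially the same route as the paper: the paper's diagrammatic argument is exactly your quadruple $(x,u,u',x) \in \mu \circ (\mu \times \idx)$ (with $x$ as the intermediate element), reassociated via Lemma \ref{lemma:associativity} to produce $(u,u',z) \in \mu$, after which Axiom (\textbf{U}) forces $u = z = u'$. Your explicit elementwise unpacking of (\textbf{U}) into the left- and right-unit constraints just makes precise the paper's terser ``by (\textbf{U}), it must be that $z = u = u'$.''
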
 

\begin{proof}
Existence immediately follows from the Unit Axiom (\textbf{U}), so it remains to prove uniqueness.

Fix some $x \in X$, and assume that $u,u'\in \eta$ are such that $(x,u,x)$ and $(x,u',x)$ are in $\mu$. Then
\begin{center}
\begin{tikzpicture}[scale=0.5, transform shape]
\begin{scope}[tqft/every boundary component/.style = {draw},tqft/cobordism edge/.style = {draw}]

\pic[tqft/reverse pair of pants, name = b, at = {(0,0 )}];
\pic[tqft/reverse pair of pants,anchor=outgoing boundary 1, name = c, at = (b-incoming boundary 1)];
\pic[tqft/cap,anchor=outgoing boundary 1,at = (b-incoming boundary 2)];
\pic[tqft/cap,anchor=outgoing boundary 1,at = (c-incoming boundary 2)];

\node[pin=180:\LARGE $x$] at (c-incoming boundary 1) {};
\node[pin=0:\LARGE $u$] at (c-incoming boundary 2) {};
\node[pin=180:\LARGE $x$] at (c-outgoing boundary 1) {};
\node[pin=180:\LARGE $x$] at (b-outgoing boundary 1) {};
\node[pin=0:\LARGE $u'$] at (b-incoming boundary 2) {};
\end{scope}
\end{tikzpicture}
\end{center}
is valid. By associativity, there exists some $z \in X$ such that
\begin{center}
\begin{tikzpicture}[scale=0.5, transform shape]
\begin{scope}[tqft/every boundary component/.style = {draw},tqft/cobordism edge/.style = {draw}]

\pic[tqft/reverse pair of pants, name = d, at = {(0,0 )}];
\pic[tqft/reverse pair of pants,anchor=outgoing boundary 1, name = e, at = (d-incoming boundary 2)];
\pic[tqft/cap,anchor=outgoing boundary 1, at = (e-incoming boundary 1)];
\pic[tqft/cap,anchor=outgoing boundary 1, at = (e-incoming boundary 2)];

\node[pin=180:\LARGE $u$] at (e-incoming boundary 1) {};
\node[pin=0:\LARGE $u'$] at (e-incoming boundary 2) {};
\node[pin=0:\LARGE $z$] at (e-outgoing boundary 1) {};
\node[pin=180:\LARGE $x$] at (d-outgoing boundary 1) {};
\node[pin=180:\LARGE $x$] at (d-incoming boundary 1) {};
\end{scope}
\end{tikzpicture}
\end{center}
is valid. But, by (\textbf{U}), it must be that $z = u = u'$. This proves uniqueness for the first statement; the proof for the second statement is similar.
\end{proof}
As a result of Lemma \ref{uniqueu}, we obtain maps $s,t: X \to \eta$ that we will call the \emph{source} and \emph{target} maps. The following Propositions describe how the source and target maps are compatible with $\hat{\alpha}$ and $\mu$.

\begin{prop}\label{prop:st-alpha}
$s = t \circ \hat{\alpha}$.
\end{prop}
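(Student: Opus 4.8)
The plan is to prove the pointwise identity $s(x) = t(\hat\alpha(x))$ for every $x \in X$ by transporting the defining relation for $s(x)$ through the rotation of Proposition \ref{alpha} and then recognizing the result as the defining relation for $t(\hat\alpha(x))$.

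First I would recall from Lemma \ref{uniqueu} that $s(x)$ is characterized as the unique element of $\eta$ with $(x, s(x), x) \in \mu$, while for any $w \in X$ the element $t(w)$ is the unique element of $\eta$ with $(t(w), w, w) \in \mu$. Starting from the valid diagram $(x, s(x), x) \in \mu$, I would apply the rotation identity of Proposition \ref{alpha}, which sends $(x, y, z) \in \mu$ to $(y, \hat\alpha(z), \hat\alpha(x)) \in \mu$. Taking $y = s(x)$ and $z = x$, this yields $(s(x), \hat\alpha(x), \hat\alpha(x)) \in \mu$.

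At this point the element $s(x)$ plays exactly the role demanded of $t(\hat\alpha(x))$: it lies in $\eta$ by the definition of the source map, and it satisfies $(s(x), \hat\alpha(x), \hat\alpha(x)) \in \mu$, which is precisely the defining relation for the target of $\hat\alpha(x)$. Invoking the uniqueness clause of Lemma \ref{uniqueu}(2) then forces $s(x) = t(\hat\alpha(x))$, which is the claimed identity.

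The argument is essentially bookkeeping with the rotation symmetry, so I do not expect a serious obstacle. The only point requiring care is to confirm that $s(x) \in \eta$ before appealing to uniqueness, since Lemma \ref{uniqueu} guarantees uniqueness only among elements of $\eta$; this membership is immediate from the definition of $s$. A secondary subtlety is making sure the indices line up correctly when applying Proposition \ref{alpha}, since a mismatch there would produce $t(\hat\alpha^{-1}(x))$ or some other variant rather than the stated $t \circ \hat\alpha$.
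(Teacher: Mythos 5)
Your proof is correct and is exactly the argument the paper intends: the paper's proof simply says the result is an immediate consequence of Proposition \ref{alpha}, and you have filled in the details correctly by rotating $(x, s(x), x) \in \mu$ to $(s(x), \hat{\alpha}(x), \hat{\alpha}(x)) \in \mu$ and invoking the uniqueness clause of Lemma \ref{uniqueu}(2). The index bookkeeping and the check that $s(x) \in \eta$ are both handled properly.
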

\begin{proof}
This is an immediate consequence of Proposition \ref{alpha}.
\end{proof}

\begin{prop}\label{prop:st-mu}
Let $(x,y,z) \in \mu$. Then
\begin{enumerate}
    \item $s(x) = t(y)$,
    \item $s(y) = s(z)$, and
    \item $t(x) = t(z)$.
\end{enumerate}
\end{prop}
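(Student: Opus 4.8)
The plan is to prove each of the three equalities by maneuvering one of the source/target units into a position where the uniqueness clause of Lemma \ref{uniqueu} applies. The two engines are associativity of $\mu$ (Lemma \ref{lemma:associativity}), which lets me re-bracket a threefold product, and the Unit Axiom (\textbf{U}), which lets me absorb a unit element sitting in an outer slot. Throughout I write the composite relation informally as a product, so that $(a,b,c) \in \mu$ reads ``$c$ is a product of $a$ and $b$''.

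For part (1), I start from the two valid triples $(x, s(x), x) \in \mu$ and $(x,y,z) \in \mu$ and form the threefold product of $x$, $s(x)$, and $y$. Bracketing it as $(x \cdot s(x)) \cdot y$ yields $z$, so $(x, s(x), y, z)$ lies in $\mu \circ (\mu \times \idx)$. Associativity rewrites this as an element of $\mu \circ (\idx \times \mu)$, i.e.\ there is some $w \in X$ with $(s(x), y, w) \in \mu$ and $(x, w, z) \in \mu$. Since $s(x) \in \eta$, the identity $\mu \circ (\eta \times \idx) = \idx$ forces $w = y$, leaving the valid triple $(s(x), y, y) \in \mu$ with $s(x) \in \eta$. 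By the uniqueness of the target unit in Lemma \ref{uniqueu}(2), this gives $s(x) = t(y)$.

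Parts (2) and (3) run the same way, with the unit inserted in a different slot. For (2) I multiply $(x,y,z)$ on the right by $s(y)$, using $(y, s(y), y) \in \mu$; re-bracketing as $x \cdot (y \cdot s(y))$ gives $z$, and reassociating produces $w$ with $(x,y,w) \in \mu$ and $(w, s(y), z) \in \mu$. Here $s(y) \in \eta$ sits in the second slot, so $\mu \circ (\idx \times \eta) = \idx$ collapses $w = z$, yielding $(z, s(y), z) \in \mu$ and hence $s(y) = s(z)$ by uniqueness of the source unit. For (3) I prepend $t(x)$ to $x$, using $(t(x), x, x) \in \mu$, and reassociate to get $w$ with $(x,y,w) \in \mu$ and $(t(x), w, z) \in \mu$; now $t(x) \in \eta$ is in the first slot, so $\mu \circ (\eta \times \idx) = \idx$ gives $w = z$, hence $(t(x), z, z) \in \mu$ and $t(x) = t(z)$. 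Alternatively, once (1) is in hand, (2) and (3) drop out by feeding the rotated triples of Proposition \ref{alpha} into (1) and invoking $s = t \circ \hat{\alpha}$ from Proposition \ref{prop:st-alpha}: rotating $(x,y,z)$ to $(y, \hat{\alpha}(z), \hat{\alpha}(x))$ gives $s(y) = t(\hat{\alpha}(z)) = s(z)$, and rotating the other way to $(\hat{\alpha}^{-1}(z), x, \hat{\alpha}^{-1}(y))$ gives $t(x) = s(\hat{\alpha}^{-1}(z)) = t(z)$.

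The one genuine subtlety, and the step I expect to be the main obstacle, is that $\mu$ is merely a relation, so reassociating a product never returns a definite intermediate value but only an existentially quantified $w$. The entire argument rests on eliminating this $w$, which is precisely what the Unit Axiom does whenever the adjacent factor is a unit; the only point requiring care is tracking, in each of the three cases, which of the two unit identities $\mu \circ (\eta \times \idx) = \idx$ or $\mu \circ (\idx \times \eta) = \idx$ applies.
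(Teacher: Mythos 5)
Your argument is correct and rests on the same mechanism as the paper's proof: associativity produces an existentially quantified intermediate $w$, the Unit Axiom pins $w$ down because an adjacent factor lies in $\eta$, and the uniqueness clause of Lemma \ref{uniqueu} then yields the equality of units. The paper proves (1) by the mirror-image insertion (putting $t(y)$ next to $x$ and absorbing it into $x$, rather than putting $s(x)$ next to $y$ and absorbing it into $y$) and then deduces (2) and (3) exactly by the rotation argument you offer as your alternative; your direct proofs of (2) and (3) are a harmless self-contained variant of the same idea.
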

\begin{proof}
Suppose that $(x,y,z) \in \mu$. Then, by Lemma \ref{uniqueu}, we have that 
\begin{center}
\begin{tikzpicture}[scale=0.5, transform shape]
\begin{scope}[tqft/every boundary component/.style = {draw},tqft/cobordism edge/.style = {draw}]

\pic[tqft/reverse pair of pants, name = d, at = {(0,0)}];
\pic[tqft/reverse pair of pants,anchor=outgoing boundary 1, name = e, at = (d-incoming boundary 2)]; 

\pic[tqft/cap,anchor=outgoing boundary 1, at = (e-incoming boundary 1)];

\node[pin=180:\LARGE $t(y)$] at (e-incoming boundary 1) {};
\node[pin=0:\LARGE $y$] at (e-incoming boundary 2) {};
\node[pin=0:\LARGE $y$] at (e-outgoing boundary 1) {};
\node[pin=180:\LARGE $z$] at (d-outgoing boundary 1) {};
\node[pin=180:\LARGE $x$] at (d-incoming boundary 1) {};
\end{scope}
\end{tikzpicture}
\end{center}
is valid. By associativity, there then exists some $w \in X$ such that
\begin{center}
\begin{tikzpicture}[scale=0.5, transform shape]
\begin{scope}[tqft/every boundary component/.style = {draw},tqft/cobordism edge/.style = {draw}]

\pic[tqft/reverse pair of pants, name =b, at = {(0,0)}];

\pic[tqft/reverse pair of pants,anchor=outgoing boundary 1, name =c, at = (b-incoming boundary 1)];

\pic[tqft/cap,anchor=outgoing boundary 1, at = (c-incoming boundary 2)];

\node[pin=180:\LARGE $x$] at (c-incoming boundary 1) {};
\node[pin=0:\LARGE $t(y)$] at (c-incoming boundary 2) {};
\node[pin=180:\LARGE $w$] at (c-outgoing boundary 1) {};
\node[pin=180:\LARGE $z$] at (b-outgoing boundary 1) {};
\node[pin=0:\LARGE $y$] at (b-incoming boundary 2) {};
\end{scope}
\end{tikzpicture}
\end{center}
is valid. By (\textbf{U}), it must be that $w=x$. It then follows from Lemma \ref{uniqueu} that $s(x) = t(y)$. This proves (1).

Using Proposition \ref{alpha}, we have that $(y,\hat{\alpha}(z),\hat{\alpha}(x))$ and $(\hat{\alpha}^{-1}(z),x, \hat{\alpha}^{-1}(y))$ are in $\mu$. By (1), it follows that $s(y) = t(\hat{\alpha}(z))$ and $s(\hat{\alpha}^{-1}(z)) = t(x)$. Using Proposition \ref{prop:st-alpha}, we deduce (2) and (3).
\end{proof}

\subsection{Dual structures}
The symmetry of axioms (\textbf{U}), (\textbf{C}), and (\textbf{F}) imply that, given any Frobenius object $X = (X, \epsilon, \eta, \delta, \mu)$ in $\rel$, we can obtain various ``dual'' Frobenius structures on the same underlying set $X$.

The \emph{rotation} of $X$, denoted $X^\rot = (X, \epsilon^\rot, \eta^\rot, \delta^\rot, \mu^\rot)$, has structure morphisms that are rotated by an angle of $\pi$ with respect to those of $X$. Specifically, using the identification $\{\bullet\} \times X \cong X \times \{\bullet\} \cong X$, we have
\begin{itemize}
    \item $\epsilon^\rot = \eta$,
    \item $\eta^\rot = \epsilon$,
    \item $(x,y,z) \in \delta^\rot$ if and only if $(z,y,x) \in \mu$,
    \item $(x,y,z) \in \mu^\rot$ if and only if $(z,y,x) \in \delta$.
\end{itemize}

The \emph{dagger} of $X$, denoted $X^\dagger = (X,\epsilon^\dagger, \eta^\dagger, \delta^\dagger, \mu^\dagger)$, has structure morphisms that are vertically reflected with respect to those of $X$. Specifically,
\begin{itemize}
    \item $\epsilon^\dagger = \eta$,
    \item $\eta^\dagger = \epsilon$,
    \item $(x,y,z) \in \delta^\dagger$ if and only if $(y,z,x) \in \mu$,
    \item $(x,y,z) \in \mu^\dagger$ if and only if $(z,x,y) \in \delta$.
\end{itemize}

The \emph{opposite} of $X$, denoted $X^\op = (X,\epsilon, \eta, \delta^\op, \mu^\op)$, has structure morphisms that are horizontally reflected with respect to those of $X$. Specifically, $X^\op$ has the same counit and unit morphisms as $X$, and
\begin{itemize}
    \item $(x,y,z) \in \delta^\op$ if and only if $(x,z,y) \in \delta$,
    \item $(x,y,z) \in \mu^\op$ if and only if $(y,x,z) \in \mu$.
\end{itemize}

\begin{remark}\label{rmk:nakayama} 
From Proposition \ref{prop:alphaunit} and Lemma \ref{md}, one can see that $\hat{\alpha}$ is an isomorphism from $X$ to $X^\rot$. Since $\hat{\alpha}^\rot = \hat{\alpha}$, it follows that $\hat{\alpha}^2$ is an automorphism of $X$. This is the analogue of the Nakayama automorphism of a Frobenius algebra \cite{nakayama2}\footnote{See \cite{fuchs} for a definition of Nakayama automorphism of a Frobenius object in any sovereign monoidal category.}. An example of a Frobenius object in $\rel$ with nontrivial Nakayama automorphism is given in Example \ref{example:torus}.
\end{remark}

%========================
\section{Simplicial Sets}\label{sec:simplicial}

The identities proven in Section \ref{sub:st} are actually part of a much richer structure, that of a simplicial set. We briefly review the definitions here.

\begin{definition} \label{dfn:simplicial}A \emph{simplicial set} $\mathcal{X}$ is a sequence $X_0, X_1, \dots$  of sets equipped with maps $d_i^q: X_q \to X_{q-1}$ (called \emph{face maps}), $0 \leq i \leq q$, and $s_i^q : X_q \to X_{q+1}$ (called \emph{degeneracy maps}), $0 \leq i \leq q$,  such that
\begin{align}
d_i^{q-1}d_j^q &= d_{j-1}^{q-1}d_i^q, \qquad i < j,\label{eqn:twoface}\\
s_i^{q+1}s_j^q &= s_{j+1}^{q+1}s_i^q, \qquad i \leq j, \label{eqn:twodegen}\\
d_i^{q+1}s_j^q &= \begin{cases}
s_{j-1}^{q-1}d_i^q, & i< j,\\
\id, & i = j \mbox{ or }j+1, \\
s_j^{q-1}d_{i-1}^q, & i > j+1. \end{cases}\label{eqn:facedegen}
\end{align}
\end{definition}

We will also need to consider $n$-truncated versions of simplicial sets, which only include data going up to $X_n$:
\begin{definition}\label{dfn:nsimplicial}
An \emph{$n$-truncated simplicial set} $\mathcal{X}$ is a sequence $X_0, X_1, \dots X_n$ of sets equipped with face maps $d_i^q: X_q \to X_{q-1}$, $0 \leq i \leq q \leq n$, and degeneracy map $s_i^q : X_q \to X_{q+1}$, $0 \leq i \leq q < n$, satisfying \eqref{eqn:twoface}--\eqref{eqn:facedegen} whenever the maps on both sides of the equation exist.
\end{definition}

Suppose that $\mathcal{X}$ is a (possibly $n$-truncated) simplicial set. For $1 \leq q \leq n+1$, we let $\Delta_q \mathcal{X}$ denote the set of $(q+1)$-tuples $(\zeta_0, \dots \zeta_q)$, $\zeta_i \in X_{q-1}$, such that
\begin{equation}\label{eqn:horncompat}
     d_i^{q-1} \zeta_j = d_{j-1}^{q-1} \zeta_i
\end{equation}
for $i < j$. Intuitively, an element of $\Delta_q \mathcal{X}$ can be viewed a $(q-1)$-dimensional outline of a $q$-simplex. Given such an outline, there may or may not be a $q$-simplex whose boundary is the given outline. Furthermore, even if such a $q$-simplex exists, it may not be unique. To address these issues, we consider the \emph{boundary map} $\delta_q: X_q \to \Delta_q \mathcal{X}$,
\[ \delta_q(w) = (d_0^q w, \dots, d_q^q w).\]
\begin{definition}
A simplicial set $\mathcal{X}$ is called \emph{$n$-coskeletal} if $\delta_q$ is a bijection for $q>n$.
\end{definition}

It is well-known (see, for example, \cite{artin-mazur}) that any $n$-truncated simplicial set has a unique extension to an $n$-coskeletal simplicial set. The extension can be recursively constructed by taking $X_{n+1} = \Delta_{n+1} \mathcal{X}$.

\section{Frobenius objects in $\rel$ and simplicial sets}

In this section, we present the main results of the paper. Given a Frobenius object in $\rel$, we associate a simplicial set. We then find a characterization of the simplicial sets that arise in this way, thus obtaining a one-to-one correspondence.

\subsection{From Frobenius objects in $\rel$ to simplicial sets} \label{sub:frobtosimp}

Let (X, $\mu$, $\delta$, $\epsilon$, $\eta$) be a Frobenius object in $\rel$. We can construct a $2$-truncated simplicial set as follows. Set
\begin{align*}
X_0 &: = \eta \subseteq X,\\
X_1 & := X,\\
X_2 & := \mu \subseteq X^3.
\end{align*}
The degeneracy map $s_0^0: X_0 \to X_1$ is the inclusion map. The face maps $d_i^1: X_1 \to X_0$ are the source and target maps: 
\begin{align*}
     d_0^1 &= s, & d_1^1 &= t.
\end{align*}
The degeneracy maps $s_i^1: X_1 \to X_2$ are given by
\begin{align*}
    s_0^1(x) &= (t(x),x,x), & s_1^1(x) &= (x,s(x),x).
\end{align*}
The face maps $d_i^2: X_2 \to X_1$ are the projection maps:
\begin{align*}
    d_0^2 (x,y,z) &= y, & d_1^2(x,y,z) &= z, & d_2^2(x,y,z) &= x.
\end{align*}

\begin{thm}\label{thm:trunc}
The sets $X_0,X_1,X_2$, with face and degeneracy maps defined as above, form a $2$-truncated simplicial set. Consequently, there exists a unique $2$-coskeletal extension $\mathcal{X}$.
\end{thm}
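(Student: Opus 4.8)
The plan is to verify the simplicial identities \eqref{eqn:twoface}--\eqref{eqn:facedegen} directly, by a finite case analysis over the index combinations that are nonvacuous in the $2$-truncated setting, and then to deduce the last sentence from the Artin--Mazur extension result recalled before the theorem. Before touching the identities, I would first confirm that all the structure maps are well-defined, i.e.\ that the degeneracy maps land where claimed. For $s_0^0$ there is nothing to check, since it is the inclusion $\eta \hookrightarrow X$; for the others, $s_0^1(x) = (t(x),x,x)$ and $s_1^1(x) = (x,s(x),x)$ lie in $X_2 = \mu$ precisely by the defining properties of the target and source maps established in Lemma \ref{uniqueu}.

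The one preliminary fact I expect to need, which is not quite stated earlier, is that $s$ and $t$ restrict to the identity on $X_0 = \eta$. This follows from the Unit Axiom (\textbf{U}): for $u \in \eta$, reading $u$ as the distinguished unit in the first slot of $(u,s(u),u) \in \mu$ forces $s(u) = u$, and reading $u$ as the unit in the middle slot of $(t(u),u,u) \in \mu$ forces $t(u) = u$. I would isolate this as a one-line observation, since it is used several times below.

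With these in hand the identities are checked case by case. The two-face relation \eqref{eqn:twoface} is nonvacuous only for $q=2$, and its three cases $(i,j) \in \{(0,1),(0,2),(1,2)\}$ unwind, under the projection definitions of $d^2_i$ together with $d^1_0 = s$ and $d^1_1 = t$, into the three assertions $s(y)=s(z)$, $s(x)=t(y)$, and $t(x)=t(z)$ for $(x,y,z) \in \mu$. These are exactly the three parts of Proposition \ref{prop:st-mu}, so this is where the genuine content of the theorem lives, although that work is already done. The two-degeneracy relation \eqref{eqn:twodegen} is nonvacuous only for $q=0$, $i=j=0$, where both sides evaluate on $u \in \eta$ to $(u,u,u)$ by the preliminary fact $s(u)=t(u)=u$. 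For the face-degeneracy relation \eqref{eqn:facedegen} I would separate the case $q=0$ (which reduces again to $s|_\eta = t|_\eta = \id$) from the case $q=1$ with $j \in \{0,1\}$; in the latter, substituting $s_0^1(x)=(t(x),x,x)$ and $s_1^1(x)=(x,s(x),x)$ and reading off the relevant projection makes each subcase immediate, with the two non-identity subcases matching $s_0^0 d_1^1$ and $s_0^0 d_0^1$ respectively.

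Finally, the existence and uniqueness of the $2$-coskeletal extension $\mathcal{X}$ requires no separate argument: it is the general fact that every $n$-truncated simplicial set extends uniquely to an $n$-coskeletal one. The main (and essentially only) obstacle is organizational, namely matching the indexing conventions of \eqref{eqn:twoface}--\eqref{eqn:facedegen} against the explicit face and degeneracy maps, because every nontrivial equality has already been packaged into Lemma \ref{uniqueu} and Proposition \ref{prop:st-mu}.
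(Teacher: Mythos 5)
Your proposal is correct and follows exactly the route the paper intends: the paper leaves this verification as an exercise, remarking only that the nontrivial cases of \eqref{eqn:twoface} follow from Proposition \ref{prop:st-mu}, which is precisely where you locate the content. Your additional observation that $s$ and $t$ restrict to the identity on $\eta$ (needed for \eqref{eqn:twodegen} and the $q=0$ cases of \eqref{eqn:facedegen}) is a correct and worthwhile detail that the paper leaves implicit.
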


We leave the verification of Theorem \ref{thm:trunc} as a straightforward exercise for the reader. But we remark that the nontrivial cases of \eqref{eqn:twoface} follow from Proposition \ref{prop:st-mu}.

Our next goal will be to identify the special properties possessed by the simplicial set $\mathcal{X}$ that arises via Theorem \ref{thm:trunc}. In what follows, we will use a modified version of the $2$-boundary map $\delta_2$, given by $\delta_2(\zeta) = (d_2^2 \zeta, d_0^2 \zeta, d_1^2 \zeta)$ for $\zeta \in X_2$; this choice of convention allows us to identify $X_2$ with its image under $\delta_2$ without permuting the components.

The following statements follow by construction and from Axiom (\textbf{U}). 
\begin{prop}\label{prop:x2relations}
\begin{enumerate}
    \item The $2$-boundary map $\delta_2: X_2 \to \Delta_2 \mathcal{X}$ is injective.
    \item For any $\zeta \in X_2$, if $d_0^2\zeta = s_0^0 u$ for some $u \in X_0$, then $d_2^2 \zeta = d_1^2 \zeta$. Similarly, if $d_2^2\zeta = s_0^0 u$ for some $u \in X_0$, then $d_0^2 \zeta = d_1^2 \zeta$.
\end{enumerate}
\end{prop}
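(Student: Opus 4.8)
The plan is to translate each assertion into the element-wise language of $\mu \subseteq X^3$ and deduce it from the Unit Axiom (\textbf{U}) together with the source/target apparatus of Lemma \ref{uniqueu}. Recall that under the convention $\delta_2(\zeta) = (d_2^2 \zeta, d_0^2 \zeta, d_1^2 \zeta)$, an element $\zeta = (x,y,z) \in X_2 = \mu$ has $\delta_2(\zeta) = (x,y,z)$, so $\delta_2$ is literally the identity inclusion $\mu \hookrightarrow X^3$ followed by the remembering of the three $1$-faces. Thus part (1) is essentially automatic: two elements of $X_2$ with the same image under $\delta_2$ are the same triple in $X^3$, hence equal as elements of $\mu$. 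I would simply observe that $\delta_2$ is injective because it is (up to reindexing) the inclusion $X_2 = \mu \subseteq X^3 = X_1^3$, and the three face maps $d_2^2, d_0^2, d_1^2$ recover the three coordinates.

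For part (2), I would unwind the degeneracy and face definitions. An element $u \in X_0 = \eta$ satisfies $s_0^0 u = u$ as an element of $X_1 = X$, so the hypothesis $d_0^2 \zeta = s_0^0 u$ for $\zeta = (x,y,z)$ reads $y = u \in \eta$. The claim $d_2^2 \zeta = d_1^2 \zeta$ becomes $x = z$. So I must show: if $(x,y,z) \in \mu$ with $y \in \eta$, then $x = z$. This is exactly the uniqueness half of Lemma \ref{uniqueu}(1): the Unit Axiom (\textbf{U}) guarantees $(x, s(x), x) \in \mu$ with $s(x) \in \eta$, and Lemma \ref{uniqueu} says that for the element $x$ the unit witnessing $(x, u', x) \in \mu$ is unique. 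The subtlety is that I am handed $(x,y,z)$ with $y \in \eta$ but not yet knowing $x = z$, so I cannot directly invoke uniqueness. Instead I would argue as in the proof of Lemma \ref{uniqueu}: feed $(x,y,z)$ into the Unit Axiom by multiplying on an appropriate side by a unit and using associativity (Lemma \ref{lemma:associativity}) to force the output coordinate to agree, concluding $z = x$. The symmetric statement (if $d_2^2\zeta = x \in \eta$ then $y = z$) follows identically, using the target side of Lemma \ref{uniqueu} or, more slickly, by applying the already-proven case to a rotated version of $\zeta$ via Proposition \ref{alpha}.

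The main obstacle is the bookkeeping in part (2): the hypothesis places a unit in the middle slot $y$ of $(x,y,z) \in \mu$, whereas the Unit Axiom and Lemma \ref{uniqueu} are stated for the specific configurations $(x,s(x),x)$ and $(t(x),x,x)$. Bridging this gap requires one application of associativity to relate an arbitrary triple with a unit entry to the canonical one, exactly the maneuver performed in the uniqueness argument of Lemma \ref{uniqueu}. Once that translation is in place, both statements drop out immediately, and I expect the whole proof to occupy only a few lines of graphical calculus.
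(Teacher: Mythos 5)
Your part (1) coincides with the paper's (omitted) argument: with the convention $\delta_2(\zeta) = (d_2^2 \zeta, d_0^2 \zeta, d_1^2 \zeta)$ and the face maps $d_i^2$ being coordinate projections, $\delta_2$ is just the inclusion $X_2 = \mu \hookrightarrow X_1^3$, so injectivity is automatic. For part (2) you correctly reduce the claim to ``$(x,u,z) \in \mu$ with $u \in \eta$ implies $x = z$'' (and the mirror statement with the unit in the first slot), but you then manufacture a difficulty that is not there. Axiom (\textbf{U}) is an \emph{equality of relations}, $\mu \circ (\idx \times \eta) = \idx$, and its left-hand side is by definition $\{(x,z) \suchthat (x,u,z) \in \mu \mbox{ for some } u \in \eta\}$; the inclusion of this set into $\Delta(X)$ is verbatim the claim. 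The other half follows the same way from $\mu \circ (\eta \times \idx) = \idx$. No associativity, no Lemma \ref{uniqueu}, and no rotation via Proposition \ref{alpha} is needed; the paper's one-line remark that the proposition ``follows by construction and from Axiom (\textbf{U})'' means exactly this.

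Your stated worry --- that you ``cannot directly invoke uniqueness'' --- comes from reading (\textbf{U}) only as an existence statement (the $\supseteq$ inclusion, which produces $s(x)$ and $t(x)$) supplemented by the uniqueness of the unit from Lemma \ref{uniqueu}; but the relevant content here is the $\subseteq$ inclusion, which is a separate and directly usable piece of the axiom. Note, moreover, that the associativity detour you sketch cannot actually substitute for it: after composing with a unit and reassociating, the final step that ``forces the output coordinate to agree'' is itself an application of that same $\subseteq$ inclusion (this is precisely what happens in the last line of the proof of Lemma \ref{uniqueu}, where $(u,u',z) \in \mu$ with $u,u' \in \eta$ yields $z = u = u'$). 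So as written the detour is either circular or collapses back to the one-line direct argument. The conclusion you reach is correct, but the proof should simply cite the element-wise reading of Axiom (\textbf{U}).
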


The simplicial set $\mathcal{X}$ satisfies the following lifting property involving $3$-simplices. By construction, $X_3$ consists of $4$-tuples $(\zeta_0, \zeta_1, \zeta_2, \zeta_3)$, where $\zeta_i \in X_2$, satisfying \eqref{eqn:horncompat}. If, identifying $X_2$ with its image under $\delta_2$, we write $\zeta_i = (x_{i,2},x_{i,0},x_{i,1})$, then \eqref{eqn:horncompat} requires that $x_{j,i} = x_{i,j-1}$ for $i \leq j$. These equations can be represented by a fully-labeled associativity diagram:

\begin{equation}\label{eqn:x3}
\begin{tikzpicture}[scale=0.5, transform shape, baseline=(current  bounding  box.center)]
\begin{scope}[tqft/every boundary component/.style = {draw},tqft/cobordism edge/.style = {draw}]

\pic[tqft/reverse pair of pants, name = a, at = {(0,0)}];
\pic[tqft/reverse pair of pants, name= b, anchor=outgoing boundary 1, at= (a-incoming boundary 1)];
\node[pin=180:\LARGE $x_{3,2}$] at (b-incoming boundary 1) {};
\node[pin=0:\LARGE $x_{3,0}$] at (b-incoming boundary 2) {};
\node[pin=180:{\LARGE $x_{3,1}=x_{1,2}$}] at (a-incoming boundary 1) {};
\node[pin=0:\LARGE $x_{1,0}$] at (a-incoming boundary 2) {};
\node[pin=180:\LARGE $x_{1,1}$] at (a-outgoing boundary 1) {};

\node[text width= 1.3cm] at (5.3,0) {\Huge $=$};

\pic[tqft/reverse pair of pants, name = h, at = {(8,0)}];
\pic[tqft/reverse pair of pants, name=j, anchor=outgoing boundary 1, at = (h-incoming boundary 2)];

\node[pin=180:\LARGE $x_{0,2}$] at (j-incoming boundary 1) {};
\node[pin=0:\LARGE $x_{0,0}$] at (j-incoming boundary 2) {};
\node[pin=180:\LARGE $x_{2,2}$ ] at (h-incoming boundary 1) {};
\node[pin=0:{\LARGE $x_{0,1}=x_{2,0}$}] at (h-incoming boundary 2) {};
\node[pin=180:\LARGE $x_{2,1}$] at (h-outgoing boundary 1) {};
\end{scope}
\end{tikzpicture}
\end{equation}
The equal sign in \eqref{eqn:x3} means that the corresponding boundary labels are equal; so $x_{3,2} = x_{2,2}$, $x_{3,0} = x_{0,2}$, etc. 
Thus, such fully-labeled associativity diagrams provide a way to visualize elements of $X_3$.

The following statement is a direct consequence of the associativity property (Lemma \ref{lemma:associativity}). For our purposes, the important fact is that it provides a way of characterizing associativity in terms of the simplicial structure.

\begin{prop} \label{prop:simplicialassociativity}
\begin{enumerate}
    \item Let $\zeta_0, \zeta_2 \in X_2$ be such that $d_0^2 \zeta_2 = d_1^2 \zeta_0$. Then there exists some $\gamma \in X_3$ such that $d_0^3 \gamma = \zeta_0$ and $d_2^3 \gamma = \zeta_2$.
    \item Let $\zeta_1, \zeta_3 \in X_2$ be such that $d_1^2 \zeta_3 = d_2^2 \zeta_1$. Then there exists some $\gamma \in X_3$ such that $d_1^3 \gamma = \zeta_1$ and $d_3^3 \gamma = \zeta_3$.
\end{enumerate}
\end{prop}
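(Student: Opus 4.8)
The plan is to unwind the $2$-coskeletal structure so that the desired $3$-simplex becomes a fully-labeled associativity diagram, and then to read off the existence of the required faces directly from associativity of $\mu$ (Lemma~\ref{lemma:associativity}).

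First I would record that, since $\mathcal{X}$ is $2$-coskeletal, $X_3 = \Delta_3\mathcal{X}$ and the face maps $d_i^3$ are the projections $d_i^3(\zeta_0,\zeta_1,\zeta_2,\zeta_3) = \zeta_i$. Thus producing $\gamma \in X_3$ with prescribed faces $\zeta_0,\zeta_2$ (part (1)) amounts to finding $\zeta_1,\zeta_3 \in X_2$ so that $(\zeta_0,\zeta_1,\zeta_2,\zeta_3)$ satisfies the horn-compatibility conditions \eqref{eqn:horncompat}, and symmetrically for part (2). Writing each face as $\zeta_i = (x_{i,2},x_{i,0},x_{i,1}) \in \mu$, these conditions are exactly the equalities of boundary labels appearing in the associativity diagram \eqref{eqn:x3}.

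For part (1), I would set $\zeta_0 = (v,w,p)$ and $\zeta_2 = (u,p,f)$, where the hypothesis $d_0^2\zeta_2 = d_1^2\zeta_0$ is precisely the matching of the middle label $p$. These two faces exhibit $(u,v,w,f) \in \mu\circ(\id\times\mu)$, witnessed by $p$. By Lemma~\ref{lemma:associativity}, $\mu\circ(\id\times\mu) = \mu\circ(\mu\times\id)$, so there is an $m \in X$ with $(u,v,m)\in\mu$ and $(m,w,f)\in\mu$; I would then take $\zeta_3 = (u,v,m)$ and $\zeta_1 = (m,w,f)$. For part (2) the argument is the mirror image: the given faces $\zeta_1,\zeta_3$ (with $d_1^2\zeta_3 = d_2^2\zeta_1$ matching the label $m$) exhibit membership in $\mu\circ(\mu\times\id)$, and the reverse inclusion supplies the intermediate $p$ defining $\zeta_0,\zeta_2$.

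Finally I would check that the resulting $4$-tuple lies in $X_3$ by verifying each compatibility $x_{j,i} = x_{i,j-1}$ for $i<j$; with the labeling above every such equation reduces to $u=u$, $v=v$, $w=w$, $p=p$, $m=m$, or $f=f$. The only real work is this bookkeeping: the sole mathematical input is associativity of $\mu$, so the main obstacle is simply tracking the index conventions (in particular the modified $2$-boundary map $\delta_2$) carefully enough to confirm that a horn filling is exactly an instance of the associativity identity, with the hypotheses guaranteeing composability of the two given faces.
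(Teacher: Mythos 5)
Your proposal is correct and follows exactly the route the paper intends: the paper states the proposition as "a direct consequence of the associativity property (Lemma \ref{lemma:associativity})," and your argument simply supplies the bookkeeping — identifying $X_3$ with $\Delta_3\mathcal{X}$ via coskeletality, translating the two given faces into one side of the associativity diagram \eqref{eqn:x3}, and using $\mu\circ(\idx\times\mu)=\mu\circ(\mu\times\idx)$ to produce the remaining two faces. The labeling and compatibility checks all work out, so there is nothing to add.
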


Recall from Proposition \ref{prop:alpha} that, if $X$ is a Frobenius object in $\rel$, then there is an associated automorphism $\hat{\alpha}$ of $X = X_1$. We now consider the compatibility of $\mathcal{X}$ with $\hat{\alpha}$.

As a result of Proposition \ref{prop:x2relations}, we can identify $X_2$ with a subset of $\Delta_2 \mathcal{X}$, giving us a filtration of sets $X_2 \subseteq \Delta_2 \mathcal{X} \subseteq (X_1)^3$. Recall from Remark \ref{remark:rotation} the ``rotation'' action of $\hat{\alpha}$ on $(X_1)^3$, given by $(x,y,z) \mapsto (y, \hat{\alpha}(z), \hat{\alpha}(x))$. The following is immediate from Proposition \ref{alpha}.
\begin{prop}\label{prop:alphafilter}
$X_2$ is invariant under the $\Z$-action generated by the rotation action of $\hat{\alpha}$.
\end{prop}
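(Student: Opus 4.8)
The plan is to reduce the statement entirely to Proposition \ref{alpha}. First I would fix notation: under the identification of $X_2$ with its image under the modified boundary map $\delta_2$, the chosen convention $\delta_2(\zeta) = (d_2^2 \zeta, d_0^2 \zeta, d_1^2 \zeta)$ together with the formulas $d_0^2(x,y,z)=y$, $d_1^2(x,y,z)=z$, $d_2^2(x,y,z)=x$ yields $\delta_2(x,y,z)=(x,y,z)$, so that $X_2 = \mu$ as a subset of $(X_1)^3$ with no permutation of components. Let $r \colon (X_1)^3 \to (X_1)^3$ denote the rotation map $r(x,y,z) = (y,\hat{\alpha}(z),\hat{\alpha}(x))$ of Remark \ref{remark:rotation}. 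Since $\hat{\alpha}$ is a bijection by Proposition \ref{prop:alpha}, the map $r$ is a bijection, and the $\Z$-action in question is the cyclic group it generates.

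The key step is the observation that Proposition \ref{alpha} is exactly the assertion that $(x,y,z) \in \mu$ if and only if $r(x,y,z) \in \mu$. Reading this equivalence in one direction gives $r(X_2) \subseteq X_2$, and reading its contrapositive (equivalently, applying the equivalence to $r^{-1}$) gives $r^{-1}(X_2) \subseteq X_2$; hence $X_2$ is $r$-invariant. Iterating, $r^n(X_2) = X_2$ for every $n \in \Z$, so $X_2$ is invariant under the whole $\Z$-action generated by $r$.

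I do not expect any genuine obstacle: the entire mathematical content sits in Proposition \ref{alpha}, and what remains is the purely formal point that a membership equivalence under a bijection promotes to invariance under the generated cyclic group. The only thing worth stating carefully is that the identification via $\delta_2$ matches the rotation action on $(X_1)^3$ with the rotation action on $X^3$ appearing in Proposition \ref{alpha}; the convention chosen for $\delta_2$ was arranged precisely to guarantee this, so that no bookkeeping of permuted components is needed.
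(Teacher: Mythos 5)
Your proposal is correct and takes exactly the route the paper does: the paper states that the proposition is immediate from Proposition \ref{alpha}, and your argument simply spells out the (correct) bookkeeping of the identification via $\delta_2$ and the formal passage from a membership equivalence to invariance under the generated cyclic group.
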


\begin{remark}
It can also be shown, using Propositions \ref{prop:alphaunit} and \ref{prop:st-alpha}, that the rotation action of $\hat{\alpha}$ preserves $\Delta_2 \mathcal{X}$. Thus the filtration $X_2 \subseteq \Delta_2 \mathcal{X} \subseteq (X_1)^3$ is invariant.
\end{remark}

\subsection{From simplicial sets to Frobenius objects in $\rel$}

Let $\mathcal{X}$ be a $2$-coskeletal simplicial set, equipped with an automorphism $\hat{\alpha}$ of $X_1$, satisfying the properties in Propositions \ref{prop:x2relations}, \ref{prop:simplicialassociativity}, and \ref{prop:alphafilter}. From this, we can construct a Frobenius object in $\rel$, defined as follows. Set $X = X_1$, with the structure relations given by
\begin{itemize}
\item $\mu = X_2 \subseteq X^3$,
\item $\eta = s_0^0(X_0) \subseteq X$,
\item $\epsilon = (\hat{\alpha} \circ s_0^0)(X_0) \subseteq X$,
\item $\delta = \{(y, \hat{\alpha}(x), z) \in X^3 \suchthat (x,y,z) \in \mu\} \subseteq X^3$.
\end{itemize}

\begin{thm}\label{thm:simptofrob}
$(X, \epsilon, \eta, \delta, \mu)$ is a Frobenius object in $\rel$.
\end{thm}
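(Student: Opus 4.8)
The plan is to verify the three axioms (\textbf{U}), (\textbf{C}), (\textbf{F}) directly, by unwinding each composite relation through the composition law in $\rel$ into an elementary existence statement about membership in $\mu$, $\eta$, $\epsilon$, and $\delta$. The tool that makes this manageable is the defining relation $\delta = \{(y,\hat{\alpha}(x),z) \suchthat (x,y,z)\in\mu\}$: since $\hat{\alpha}$ is a bijection, $(a,b,c)\in\delta$ if and only if $(\hat{\alpha}^{-1}(b),a,c)\in\mu$, so every statement can be rephrased purely in terms of $\mu$. Combined with the rotation invariance of $\mu$ supplied by Proposition~\ref{prop:alphafilter} (that is, $(x,y,z)\in\mu$ iff $(y,\hat{\alpha}(z),\hat{\alpha}(x))\in\mu$) and the identity $\epsilon = \hat{\alpha}(\eta)$, this lets me move a unit among the three slots of $\mu$ at will.

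First I would dispatch the unit and counit axioms. Each of the four equations asserts that a composite equals $\Delta(X)$, which splits into an \emph{existence} half and a \emph{uniqueness} half. For existence I use the degeneracy maps: the simplicial identities \eqref{eqn:facedegen} force $s_0^1(x) = (u,x,x)$ and $s_1^1(x) = (x,u',x)$ with $u,u'\in s_0^0(X_0)=\eta$, and since $s_0^1(x),s_1^1(x)\in X_2=\mu$, this provides units in the first and second slots of $\mu$ along the diagonal. For uniqueness I use Proposition~\ref{prop:x2relations}(2): a triple in $\mu$ with a unit in its first (resp.\ second) slot has equal remaining entries. For (\textbf{U}) these two ingredients combine at once. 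For (\textbf{C}), after rewriting $\delta$ in terms of $\mu$, the equation $(\epsilon \times \idx)\circ\delta = \idx$ becomes the existence of a unit $u\in\eta$ with $(u,a,q)\in\mu$, handled exactly as before; the equation $(\idx \times \epsilon)\circ\delta = \idx$ instead produces a unit in the third slot, and here I apply the inverse of the rotation of Proposition~\ref{prop:alphafilter} to move that unit into the first slot, after which Proposition~\ref{prop:x2relations}(2) again applies.

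The Frobenius axiom (\textbf{F}) is the substantive step, and I would reduce it to associativity of $\mu$. I first establish that $\mu$ is associative using Proposition~\ref{prop:simplicialassociativity}: under the identification of $X_3$ with fully-labeled associativity diagrams \eqref{eqn:x3}, part (1) sends a composite built by first multiplying two entries and then multiplying by the third to a $3$-simplex whose complementary faces exhibit the other bracketing, giving one inclusion between $\mu\circ(\idx\times\mu)$ and $\mu\circ(\mu\times\idx)$, while part (2) gives the reverse inclusion; injectivity of $\delta_2$ (Proposition~\ref{prop:x2relations}(1)) guarantees that the faces of the $3$-simplex are the expected elements of $\mu$. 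With associativity in hand, I translate each of the three subsets of $X^4$ in (\textbf{F}) into the existence of an intermediate element of $\mu$, using the $\delta$--$\mu$ correspondence and rotation invariance. A short computation shows that $\delta\circ\mu$ consists of those $(a,b,c,d)$ for which there is $e$ with $(a,b,e)\in\mu$ and $(e,\hat{\alpha}(d),c)\in\mu$, while $(\idx\times\mu)\circ(\delta\times\idx)$ consists of those $(a,b,c,d)$ for which there is $n$ with $(b,\hat{\alpha}(d),n)\in\mu$ and $(a,n,c)\in\mu$; associativity of $\mu$ identifies the two, and the third expression is handled symmetrically.

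I expect (\textbf{F}) to be the main obstacle, and specifically the bookkeeping within it. Rewriting the three composites in terms of $\mu$ introduces several applications of $\hat{\alpha}$ and $\hat{\alpha}^{-1}$ in various slots, and keeping track of exactly where each lands, and checking that the intermediate elements produced by associativity match the ones demanded by each composite, is where errors are easy to make. The one genuinely nontrivial input is matching the existence statements in Proposition~\ref{prop:simplicialassociativity} to the two bracketings of $\mu$, which requires reading the face and degeneracy conventions of \eqref{eqn:x3} correctly; everything else is formal manipulation with the composition law and the bijection $\hat{\alpha}$.
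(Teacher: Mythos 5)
Your proposal is correct and follows essentially the same route as the paper: Axiom (\textbf{U}) from the degeneracy identities together with Proposition \ref{prop:x2relations}(2), Axiom (\textbf{C}) by rewriting $\delta$ in terms of $\mu$ via $\hat{\alpha}$ and using the rotation invariance of Proposition \ref{prop:alphafilter} to reduce to (\textbf{U}), and Axiom (\textbf{F}) via the associativity-diagram interpretation of $X_3$ and Proposition \ref{prop:simplicialassociativity}. The only (immaterial) difference is that you factor the (\textbf{F}) argument through an explicit statement of associativity of $\mu$, whereas the paper applies Proposition \ref{prop:simplicialassociativity} directly to the composites; your translated forms of $\delta\circ\mu$ and $(\idx\times\mu)\circ(\delta\times\idx)$ check out.
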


\begin{proof}
Axiom (\textbf{U}) follows directly from \eqref{eqn:facedegen} and property (2) of Proposition \ref{prop:x2relations}.

From the definitions of $\epsilon$ and $\delta$, we have
\begin{align*}
    (\epsilon \times \idx) \circ \delta &= \{ (x,z) \suchthat (x,w,z) \in \delta \mbox{ for some } w \in \epsilon\} \\
    &= \{ (x,z) \suchthat (\hat{\alpha}^{-1}(w), x,z) \in \mu \mbox{ for some } w \in \epsilon\} \\
    &= \{ (x,z) \suchthat (u,x,z) \in \mu \mbox{ for some } u \in \eta\},
\end{align*}
which by Axiom (\textbf{U}) equals $\{(x,z) \suchthat x=z\} = \idx$. This proves part of Axiom (\textbf{C}). Similarly, also using the property of Proposition \ref{prop:alphafilter}, we have
\begin{align*}
    (\idx \times \epsilon) \circ \delta &= \{ (x,z) \suchthat (x,z,w) \in \delta \mbox{ for some } w \in \epsilon\} \\
    &= \{ (x,z) \suchthat (\hat{\alpha}^{-1}(z),x,w) \in \mu \mbox{ for some } w \in \epsilon\} \\
    &= \{ (x,z) \suchthat (\hat{\alpha}^{-1}(w), \hat{\alpha}^{-1}(z),\hat{\alpha}^{-1}(x)) \in \mu \mbox{ for some } w \in \epsilon\} \\
    &= \{ (x,z) \suchthat (u,\hat{\alpha}^{-1}(z), \hat{\alpha}^{-1}(x)) \in \mu \mbox{ for some } u \in \eta\} \\
    &= \{ (x,z) \suchthat \hat{\alpha}^{-1}(z) = \hat{\alpha}^{-1}(x) \}\\
    &= \idx.
\end{align*}
This proves the other part of Axiom (\textbf{C}).

The relation $(1 \times \mu) \circ (\delta \times 1)$ consists of $(x_1,x_2,x_3,x_4)$ such that $(x_1,x_3,w) \in \delta$ and  $(w, x_2, x_4) \in \mu$ for some $w \in  X$. By definition of $\delta$, the condition $(x_1,x_3,w) \in \delta$ is equivalent to $(\hat{\alpha}^{-1}(x_3),x_1,w) \in \mu$. Thus we have one side of an associativity diagram:
\begin{center}
\begin{tikzpicture}[scale = 0.5,transform shape]

\begin{scope}[tqft/every boundary component/.style = {draw},tqft/cobordism edge/.style = {draw}]

\pic[tqft/reverse pair of pants, name = a, at = {(0,0)}];
\pic[tqft/reverse pair of pants,anchor=outgoing boundary 1,name = b, at= (a-incoming boundary 1)];

\node[pin=180: \LARGE $w$] at(a-incoming boundary 1) {};
\node [pin=0: \LARGE $x_2$] at (a-incoming boundary 2) {};
\node [pin=180: \LARGE $x_4$] at (a-outgoing boundary 1) {};
\node [pin = 180: \LARGE $\hat{\alpha}^{-1}(x_3)$] at (b-incoming boundary 1) {};
\node [pin=0:\LARGE $x_1$]at (b-incoming boundary 2)  {};
\end{scope}
\end{tikzpicture}
\end{center}
By property (2) of Proposition \ref{prop:simplicialassociativity}, there exists some $w' \in X$ such that $(x_1, x_2, w') \in \mu$ and $(\hat{\alpha}^{-1}(x_3), w', x_4) \in \mu$. The latter is equivalent to $(w', x_3, x_4) \in \delta$. Together, $(x_1, x_2, w') \in \mu$ and $(w', x_3, x_4) \in \delta$ for some $w' \in X$ are equivalent to $(x_1, x_2, x_3, x_4) \in \delta \circ \mu$. This proves one part of Axiom (\textbf{F}). The proof of the other part is similar.
\end{proof}

\section{Groupoids and Frobenius objects in $\rel$}\label{sec:groupoids}

In \cite{hcc}, it was shown that special dagger Frobenius objects in $\rel$ correspond to groupoids. In this section, we describe how their result fits as a special case of the correspondence given by Theorems \ref{thm:trunc} and \ref{thm:simptofrob}.

Recall that a \emph{groupoid} is a small category where all arrows are invertible. In more concrete terms, a groupoid $G_1 \arrows G_0$ consists of sets $G_0, G_1$, equipped with \emph{source} and \emph{target} maps $s,t: G_1 \to G_0$ and a \emph{multiplication} operation $(g,h) \mapsto g \cdot h$, defined when $s(g) = t(h)$ for $g,h \in G_1$, such that
\begin{enumerate}
    \item $s(g \cdot h) = s(h)$ and $t(g \cdot h) = t(g)$ for $g,h \in G_1$ such that $s(g) = t(h)$,
    \item $(g\cdot h) \cdot k = g \cdot (h \cdot k)$ for all $g,h,k \in G_1$ such that $s(g) = t(h)$ and $s(h) = t(k)$,
    \item there exists an \emph{identity} map $e: G_0 \to G_1$ such that $s \circ e = t \circ e = \id$ and $g \cdot e(s(g)) = e(t(g)) \cdot g = g$ for all $g \in G_1$,
    \item there exists an \emph{inverse map} $G_1 \to G_1$, $g \mapsto g^{-1}$, such that $s(g^{-1}) = t(g)$, $t(g^{-1}) = s(g)$, $g \cdot g^{-1} = e(t(g))$, and $g^{-1} \cdot g = e(s(g))$ for all $g \in G_1$.
\end{enumerate}
As with groups, the identity and inverse maps are unique.

Given a groupoid $G_1 \arrows G_0$, there is a standard procedure for constructing a simplicial set, called the \emph{nerve} (see, e.g. \cite{segal}). The nerve of $G_1 \arrows G_0$ is the $2$-coskeletal simplicial set $\mathcal{X}$, where $X_0 = G_0$, $X_1 = G_1$, 
\[ X_2 = \{(g,h) \in G_1 \times G_1 \suchthat s(g) = t(h) \},\]
and where the face and degeneracy maps are as follows:
\begin{align*}
    s_0^0 &= e, & d_0^1 &= s, & d_1^1 &= t,\\
    d_0^2(g,h) &= h, & d_1^2(g,h) &= g \cdot h, & d_2^2(g,h) &= g,\\
    s_0^1(g) &= (e(t(g)),g), & s_1^1(g) &= (g, e(s(g))). &&
\end{align*}
From \eqref{eqn:x3}, it follows that the elements of $X_3$ can be identified with triplets $(g,h,k)$ such that $s(g) = t(h)$ and $s(h) = t(k)$, with 
\begin{align*}
    d_0^3(g,h,k) &= (h,k), & d_1^3(g,h,k) &= (g\cdot h, k),\\
    d_2^3(g,h,k) &= (g,h \cdot k), & d_3^3(g,h,k) &= (g,h).
\end{align*}

Note that this simplicial set comes naturally equipped with an automorphism of $X_1 = G_1$, given by the inverse map.

The following result shows that the nerve of a groupoid satisfies the conditions of Theorem \ref{thm:simptofrob} and therefore corresponds to a Frobenius object in $\rel$.
\begin{prop}
Let $\mathcal{X}$ be the nerve of a groupoid $G_1 \arrows G_0$. Then $\mathcal{X}$, with $\hat{\alpha}(g) = g^{-1}$, satisfies the properties in Propositions \ref{prop:x2relations}, \ref{prop:simplicialassociativity}, and \ref{prop:alphafilter}.
\end{prop}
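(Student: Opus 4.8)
The plan is to verify each of the three named properties directly against the explicit description of the nerve, exploiting the fact that for a groupoid every ``outline'' of a simplex can be filled uniquely by using invertibility. First I would record the translation dictionary: under the identifications $X_0 = G_0$, $X_1 = G_1$, $X_2 = \{(g,h) \suchthat s(g) = t(h)\}$, and with the modified $2$-boundary convention $\delta_2(g,h) = (d_2^2(g,h), d_0^2(g,h), d_1^2(g,h)) = (g, h, g\cdot h)$, the set $X_2$ sits inside $(X_1)^3$ as the triples $(g,h,g\cdot h)$. This makes the multiplication $\mu$ of the associated Frobenius object literally the graph of groupoid multiplication, which is the key computational fact behind everything below.

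For Proposition \ref{prop:x2relations}, injectivity of $\delta_2$ is immediate since $(g,h,g\cdot h)$ is determined by $(g,h)$; property (2) follows because $d_0^2(g,h) = h$ lying in $\eta = e(G_0)$ forces $h = e(s(g))$, whence $d_1^2(g,h) = g\cdot h = g = d_2^2(g,h)$ by the unit axiom (3) for groupoids, and symmetrically on the other side. For Proposition \ref{prop:alphafilter}, I would check that the rotation $(x,y,z) \mapsto (y, \hat\alpha(z), \hat\alpha(x))$ carries a valid triple $(g, h, g\cdot h)$ to $(h, (g\cdot h)^{-1}, g^{-1})$ and verify this is again of the form $(a,b,a\cdot b)$: since $(g\cdot h)^{-1} = h^{-1}\cdot g^{-1}$, we need $h \cdot (h^{-1}\cdot g^{-1}) = g^{-1}$, which holds by associativity and the inverse axioms. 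One must also confirm the composability condition $s(h) = t(h^{-1} g^{-1})$, which follows from the source/target identities in groupoid axioms (1) and (4).

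The main obstacle, and the step requiring genuine use of invertibility, is Proposition \ref{prop:simplicialassociativity}: given $\zeta_0 = (h,k)$ and $\zeta_2 = (g,h)$ with $d_0^2\zeta_2 = d_1^2\zeta_0$ — that is, $h = h$, so the compatibility is simply $s(g) = t(h)$ and $s(h) = t(k)$ — I must produce $\gamma \in X_3$ with $d_0^3\gamma = \zeta_0$ and $d_2^3\gamma = \zeta_2$. Using the identification of $X_3$ with triples $(g,h,k)$ satisfying the two composability conditions, the element $\gamma = (g,h,k)$ works, since $d_0^3(g,h,k) = (h,k) = \zeta_0$ and $d_2^3(g,h,k) = (g, h\cdot k)$; here I would double-check that $d_2^3$ returns $(g, h\cdot k)$ rather than $(g,h)$ and adjust the bookkeeping so the stated faces match, the point being that the associativity diagram \eqref{eqn:x3} is automatically satisfied in a groupoid because multiplication is a genuine function. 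Part (2) is handled symmetrically. I expect the only real care to be in matching the indexing conventions of the face maps $d_i^3$ against the two prescribed faces, since the existence of the filler is guaranteed and unique by functionality of the groupoid operations rather than by any nontrivial lifting.
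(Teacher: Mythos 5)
Your proof follows essentially the same route as the paper's: part (1) of Proposition \ref{prop:x2relations} is immediate, part (2) comes from the groupoid unit axiom, the lifting property is supplied by associativity of the groupoid, and the rotation invariance reduces to the identity $h \cdot (g\cdot h)^{-1} = g^{-1}$; in fact you give more detail on the associativity step than the paper, which dismisses it as straightforward. One bookkeeping slip to fix: with $\zeta_2 = (g,h)$ and $\zeta_0 = (h,k)$, the hypothesis $d_0^2\zeta_2 = d_1^2\zeta_0$ reads $h = h\cdot k$, not $h = h$ --- the pair $(g,h)$ is the face $d_3^3(g,h,k)$, not $d_2^3(g,h,k)$. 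The correct parametrization for part (1) is $\zeta_0 = (h,k)$ and $\zeta_2 = (g,\, h\cdot k)$, for which $\gamma = (g,h,k)$ is indeed a filler; since you flagged this issue yourself and the fix is routine, the argument is sound.
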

\begin{proof}
Part (1) of Proposition \ref{prop:x2relations} is immediate. For part (2), we observe that, for $\zeta = (g,h) \in X_2$, the condition $d_0^2 \zeta = s_0^0 u$ implies that $h = e(s(g))$, and therefore $d_2^2 \zeta = d_1^2 \zeta = g$. Similarly, $d_2^2 \zeta = s_0^0 u$ implies that $g = e(t(h))$, and therefore $d_0^2 \zeta = d_1^2 \zeta = h$.

The property in Proposition \ref{prop:simplicialassociativity} is a straightforward consequence of the associativity property of the groupoid.

To understand the rotation action of $\hat{\alpha}$ on $X_2$, we observe that the image of $(g,h) \in X_2$ in $(X_1)^3$ is $(g, h, g \cdot h)$. Since $\hat{\alpha}$ is the inverse map, the rotation action gives $(h, (g \cdot h)^{-1}, g^{-1})$. This is also in the image of $X_2$, since $h \cdot(g \cdot h)^{-1} = g^{-1}$.
\end{proof}

In \cite{hcc}, Heunen, Contreras, and Cattaneo constructed a Frobenius object in $\rel$ associated to a groupoid $G_1 \arrows G_0$ by setting $X = G_1$, with structure relations given by
\begin{itemize}
    \item $\mu = \{(g,h,g \cdot h) \suchthat s(g) = t(h)\}$,
    \item $\delta = \{(g \cdot h, g, h) \suchthat s(g) = t(h)\}$,
    \item $\eta = \epsilon = e(G_0)$.
\end{itemize}
We observe that their construction coincides with that of Theorem \ref{thm:simptofrob} in the case where $\mathcal{X}$ is the nerve of $G_1 \arrows G_0$. In particular, the definitions of $\mu$ and $\eta$ are identical, the agreement in $\epsilon$ is due to the fact that $e(G_0)$ is invariant under the inverse map, and the agreement in $\delta$ holds because the sets $\{(g \cdot h, g, h) \suchthat s(g) = t(h)\}$ and $\{(h, g^{-1}, g \cdot h) \suchthat s(g) = t(h)\}$ are equal. 

It is immediate from the construction that the Frobenius objects that arise from groupoids satisfy the \emph{dagger} property $X^\dagger = X$ and the \emph{special} property $\mu \circ \delta = \idx$. The converse is proven in \cite{hcc}.

\section{Commutative Frobenius objects in $\rel$}\label{sec:commutative}

Two-dimensional TQFTs (with values in a symmetric monoidal category $\cat$) are classified by commutative Frobenius objects in $\cat$. In this section, we consider commutative Frobenius objects in $\rel$ and find additional properties implied by commutativity on the corresponding simplicial set.

Let $(X, \epsilon, \eta, \delta, \mu)$ be a Frobenius object in $\rel$. Let $\tau: X \times X \relto X \times X$ denote the twist relation $\tau = \{(x,y,y,x) \suchthat x,y \in X\}$.

\begin{definition}
$X$ is \emph{commutative} if $\mu \circ \tau = \mu$. 
\end{definition}

\begin{prop}
$X$ is commutative if and only if $X = X^\op$.
\end{prop}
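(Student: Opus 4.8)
The plan is to reduce everything to a statement about $\mu$ and then propagate it to $\delta$. First I would compute the composite $\mu \circ \tau$ directly from the composition law in $\rel$. Since $\tau = \{(x,y,y,x) \suchthat x,y \in X\}$, viewed as $\{((x,y),(y,x))\}$, an element $(p,q,r) \in (X \times X) \times X$ lies in $\mu \circ \tau$ exactly when $((q,p),r) \in \mu$; that is, $(p,q,r) \in \mu \circ \tau \iff (q,p,r) \in \mu$. Comparing with the definition of $\mu^\op$, this says precisely $\mu \circ \tau = \mu^\op$. Consequently the commutativity condition $\mu \circ \tau = \mu$ is literally the same as the condition $\mu = \mu^\op$.

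Given this, the proposition splits into two halves. Since $X^\op = (X, \epsilon, \eta, \delta^\op, \mu^\op)$ has the same unit and counit as $X$ by definition, the equality $X = X^\op$ is equivalent to the conjunction of $\mu = \mu^\op$ and $\delta = \delta^\op$. The direction ``$X = X^\op \implies$ commutative'' is then immediate, since $X = X^\op$ in particular forces $\mu = \mu^\op$, hence $\mu \circ \tau = \mu$. The substance lies in the converse: I must show that $\mu = \mu^\op$ already forces $\delta = \delta^\op$, so that commutativity of $\mu$ automatically propagates to $\delta$.

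For that converse I would express both $\delta$ and $\delta^\op$ in terms of $\mu$ using Lemma \ref{md}. By the equivalence $(1 \iff 3)$ of that lemma, $(x,y,z) \in \delta$ iff $(x, \hat{\alpha}(z), y) \in \mu$, and likewise $(x,z,y) \in \delta$ iff $(x, \hat{\alpha}(y), z) \in \mu$. Thus proving $\delta = \delta^\op$ amounts to showing that $(x, \hat{\alpha}(z), y) \in \mu$ if and only if $(x, \hat{\alpha}(y), z) \in \mu$ for all $x,y,z \in X$. I would establish this by a short chain of equivalences: apply Proposition \ref{alpha} to rewrite $(x, \hat{\alpha}(z), y) \in \mu$ as $(\hat{\alpha}(z), \hat{\alpha}(y), \hat{\alpha}(x)) \in \mu$, invoke the hypothesis $\mu = \mu^\op$ to transpose the first two entries into $(\hat{\alpha}(y), \hat{\alpha}(z), \hat{\alpha}(x)) \in \mu$, and then recognize this last relation, via Proposition \ref{alpha} once more, as equivalent to $(x, \hat{\alpha}(y), z) \in \mu$.

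The main obstacle is bookkeeping rather than any genuine difficulty: one must apply Proposition \ref{alpha} in the correct direction at each step and keep careful track of which pair of entries the commutativity hypothesis is permitted to transpose. I expect the computation of $\mu \circ \tau$ and the reduction to $\mu = \mu^\op$ to be routine, and the propagation from $\mu$ to $\delta$ to be the only place where Lemma \ref{md} and Proposition \ref{alpha} are genuinely needed.
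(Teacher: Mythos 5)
Your proof is correct and follows essentially the same route as the paper: both reduce the statement to showing that commutativity of $\mu$ forces $\delta = \tau \circ \delta$ (equivalently $\delta = \delta^\op$), and both establish this by translating $\delta$-membership into $\mu$-membership via Lemma \ref{md}, rotating with Proposition \ref{alpha}, and transposing with the commutativity hypothesis. Your write-up is slightly more explicit about the reduction $X = X^\op \iff (\mu = \mu^\op \text{ and } \delta = \delta^\op)$ and the trivial converse direction, which the paper leaves implicit.
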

\begin{proof}
The result is a consequence of the fact that commutativity implies the cocommutativity property $\tau \circ \delta = \delta$. To prove this, we use Lemma \ref{md} and commutativity to check that 
\begin{align*} (x,y,z) \in \tau \circ \delta \iff& (x,z,y) \in \delta \\
\iff& (\hat{\alpha}(y),\hat{\alpha}(z), \hat{\alpha}(x)) \in \mu \\
\iff& (\hat{\alpha}(z),\hat{\alpha}(y), \hat{\alpha}(x)) \in \mu \\
\iff& (x,y,z) \in \delta. \qedhere
\end{align*}
\end{proof}

There is also a notion of opposite for simplicial sets; if $\mathcal{X}$ is a simplicial set, then the opposite $\mathcal{X}^\op$ has the same underlying sets $X_\bullet$, with the indices for face and degeneracy maps reversed:
\begin{align*}
    (d^\op)_i^q &= d_{q-i}^q, & (s^\op)_i^q &= s_{q-i}^q.
\end{align*}
It is straightforward to see from the construction in Section \ref{sub:frobtosimp} that, if $X$ is a Frobenius object with corresponding simplicial set $\mathcal{X}$, then $\mathcal{X}^\op$ is the simplicial set that corresponds to $X^\op$. Thus, we have the following:
\begin{prop}\label{prop:commop}
Let $X$ be a commutative Frobenius object in $\rel$, with corresponding simplicial set $\mathcal{X}$. Then $\mathcal{X} = \mathcal{X}^\op$.
\end{prop}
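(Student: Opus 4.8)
The plan is to prove Proposition \ref{prop:commop} by combining the already-established correspondence between Frobenius objects and simplicial sets (Theorems \ref{thm:trunc} and \ref{thm:simptofrob}) with the compatibility of this correspondence with the respective ``opposite'' constructions. The key observation, which the excerpt already asserts immediately before the statement, is that the opposite operation intertwines the two correspondences: the simplicial set built from $X^\op$ equals the opposite $\mathcal{X}^\op$ of the simplicial set built from $X$. Granting that fact, the proof becomes almost purely formal, as I now explain.

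First I would recall that, by the previous Proposition, commutativity of $X$ is equivalent to the identity $X = X^\op$ of Frobenius objects in $\rel$. Thus, if $X$ is commutative, then $X$ and $X^\op$ are literally the same Frobenius object, so they must give rise to the same simplicial set under the construction of Section \ref{sub:frobtosimp}. Since that construction is deterministic (the sets $X_0, X_1, X_2$ and the face and degeneracy maps are each defined by explicit formulas in terms of $\eta, \mu, s, t$), equal Frobenius objects yield equal $2$-truncated simplicial sets, and hence equal $2$-coskeletal extensions by the uniqueness noted after Definition \ref{def1}'s coskeletal discussion.

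Next I would invoke the intertwining fact: the simplicial set corresponding to $X^\op$ is $\mathcal{X}^\op$, where $\mathcal{X}$ is the simplicial set corresponding to $X$. Chaining the two identifications gives $\mathcal{X}^\op = (\text{simplicial set of } X^\op) = (\text{simplicial set of } X) = \mathcal{X}$, which is exactly the claim. To make the argument self-contained I would briefly verify the intertwining at the level of the $2$-truncated data: since $X^\op$ shares the same $\eta$ and $\epsilon$ with $X$ (so $X_0$ and the inclusion $s_0^0$ agree), and since $\mu^\op$ is obtained from $\mu$ by swapping the first two arguments, the set $X_2^\op = \mu^\op$ is the image of $X_2$ under the coordinate swap $(x,y,z)\mapsto(y,x,z)$; one then checks that the projection face maps $d_0^2, d_1^2, d_2^2$ get relabeled precisely according to $(d^\op)_i^2 = d_{2-i}^2$, and similarly that the source/target maps and the degeneracy maps $s_0^1, s_1^1$ are interchanged in the pattern $(s^\op)_i^q = s_{q-i}^q$. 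This is the content I expect to require the most care, since it is a bookkeeping check that the index-reversal convention for $\mathcal{X}^\op$ matches the argument-swap convention defining $\mu^\op$ and $\delta^\op$.

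The main obstacle, then, is not conceptual but notational: confirming that the two independently-defined ``opposite'' operations — reversal of simplicial face/degeneracy indices on the combinatorial side, and horizontal reflection of the Frobenius structure morphisms on the algebraic side — correspond under the construction of Section \ref{sub:frobtosimp}. Once this compatibility is pinned down (which is the step the excerpt describes as ``straightforward to see''), the proof is a one-line composition of equalities. I would therefore present the argument by first stating the intertwining as the crux, verifying it on the generating low-dimensional data, and then concluding $\mathcal{X} = \mathcal{X}^\op$ from $X = X^\op$.
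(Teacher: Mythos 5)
Your proposal is correct and follows essentially the same route as the paper: the paper likewise derives the result by combining the preceding proposition ($X$ commutative $\iff X = X^{\op}$) with the observation, stated just before the proposition and left as ``straightforward to see,'' that the construction of Section \ref{sub:frobtosimp} sends $X^{\op}$ to $\mathcal{X}^{\op}$. Your additional bookkeeping check of the intertwining on the $2$-truncated data (the coordinate swap on $\mu$ matching the index reversal $(d^{\op})_i^q = d_{q-i}^q$, and $s \leftrightarrow t$) is exactly the verification the paper omits, and it is carried out correctly.
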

A consequence of Proposition \ref{prop:commop} is that, if $\mathcal{X}$ is the simplicial set corresponding to a commutative Frobenius object, then $d_1^1 = d_0^1$, and therefore $\mathcal{X}$ decomposes as a disjoint union of simplicial sets with only one $0$-simplex. Proposition \ref{prop:st-alpha} implies that this decomposition is preserved by $\hat{\alpha}$, so commutative Frobenius objects always decompose as a disjoint union of those with one-element unit sets. In particular, the only groupoids that correspond to commutative Frobenius objects are disjoint unions of abelian groups.

The following result gives a property that commutative Frobenius objects and groupoids have in common.

\begin{prop}
Let $X$ be a commutative Frobenius object in $\rel$. Then $\hat{\alpha}$ is an involution. 
\end{prop}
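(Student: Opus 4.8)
The goal is to show that $\hat{\alpha}^2 = \id$. My plan is to combine the rotation symmetry of $\mu$ from Proposition \ref{alpha} with commutativity in order to manufacture a symmetry of $\mu$ that applies $\hat{\alpha}^2$ to a single boundary component while leaving the other two fixed. Once such a symmetry is available, I will evaluate it on a degenerate $2$-simplex and invoke the Unit Axiom (\textbf{U}) to force a fixed point.

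First I would record two symmetries of $\mu$. On one hand, three applications of Proposition \ref{alpha} (equivalently, the fact from Remark \ref{rmk:nakayama} that $\hat{\alpha}^2$ is an automorphism of $X$) give $(x,y,z) \in \mu \iff (\hat{\alpha}^2(x), \hat{\alpha}^2(y), \hat{\alpha}^2(z)) \in \mu$. On the other hand, commutativity gives $(x,y,z) \in \mu \iff (y,x,z) \in \mu$, and feeding the transposed triple into Proposition \ref{alpha} yields $(x,y,z) \in \mu \iff (x, \hat{\alpha}(z), \hat{\alpha}(y)) \in \mu$; iterating this equivalence once more gives $(x,y,z) \in \mu \iff (x, \hat{\alpha}^2(y), \hat{\alpha}^2(z)) \in \mu$. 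Comparing the two boxed equivalences and using that $\hat{\alpha}$ is a bijection (so that I may substitute $y \mapsto \hat{\alpha}^{-2}(y)$ and $z \mapsto \hat{\alpha}^{-2}(z)$), I obtain the single-component symmetry $(x,y,z) \in \mu \iff (\hat{\alpha}^2(x), y, z) \in \mu$.

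Finally, I would apply this last equivalence to the element $(x, s(x), x) \in \mu$ provided by Lemma \ref{uniqueu}, obtaining $(\hat{\alpha}^2(x), s(x), x) \in \mu$. Since $s(x) \in \eta$, the Unit Axiom (\textbf{U}) --- in the form that $(a,u,w) \in \mu$ with $u \in \eta$ forces $w = a$ --- yields $\hat{\alpha}^2(x) = x$, as desired. I expect the main obstacle to be the middle step: recognizing that commutativity and the rotation action together generate a symmetry isolating a single boundary component, whereas the rotation alone produces only the ``diagonal'' Nakayama symmetry $\hat{\alpha}^2$ acting on all three components simultaneously. The remaining manipulations are bijective substitutions and a direct appeal to the Unit Axiom, and should be routine.
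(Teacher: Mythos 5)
Your proof is correct, but it takes a longer route than the paper's. The paper argues entirely at the level of the pairing $\alpha = \epsilon \circ \mu$: commutativity of $\mu$ makes $\alpha$ a symmetric relation, and since Proposition \ref{prop:alpha} identifies $\alpha$ with the graph $\{(x,\hat{\alpha}(x))\}$ of a function, symmetry forces $(\hat{\alpha}(x),x) \in \alpha$ and hence $\hat{\alpha}^2(x) = x$ immediately --- a two-line argument. You instead work at the level of $\mu$ itself: combining the rotation symmetry of Proposition \ref{alpha} with commutativity to produce the single-component symmetry $(x,y,z) \in \mu \iff (\hat{\alpha}^2(x),y,z) \in \mu$, and then evaluating on the degenerate simplex $(x,s(x),x)$ and invoking Axiom (\textbf{U}). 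Each step checks out (the threefold rotation giving the diagonal $\hat{\alpha}^2$-symmetry, the transposed rotation and its iterate, the comparison of the two, and the final appeal to the unit axiom via Lemma \ref{uniqueu}), so the argument is sound. What your approach buys is a genuinely stronger intermediate statement --- the invariance of $\mu$ under $\hat{\alpha}^2$ applied to any single slot, which refines the Nakayama symmetry of Remark \ref{rmk:nakayama} --- at the cost of several manipulations that the paper's direct use of the nondegenerate pairing renders unnecessary.
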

\begin{proof}
By Proposition \ref{prop:alpha}, $\alpha = \{(x, \hat{\alpha}(x)) \suchthat x \in X\}$. By commutativity, it follows that $(\hat{\alpha}(x), x) \in \alpha$ for all $x \in X$, and therefore that $x = \hat{\alpha}^2(x)$.
\end{proof}

\section{Exterior algebras and cohomology rings}\label{sec:cohomology}

Recall that a standard example of a Frobenius algebra is the de Rham cohomology of a compact oriented manifold. There doesn't seem to be any direct way to convert this construction into a Frobenius object in $\rel$. Nonetheless, as we see in this section, there is an analogous construction, provided the manifold is equipped with a Riemannian metric.

\subsection{Exterior algebras}\label{sub:ext}

Let $V$ be an $n$-dimensional vector space over $\R$, equipped with an inner product and a choice of orientation. From this data, we obtain an associated volume form $\nu \in \bigwedge^n V$, an extension of the inner product to $\bigwedge V$, and the Hodge star operator $\star: \bigwedge^k V \to \bigwedge^{n-k} V$, related by the equation
\[ \lambda \wedge \star \theta = \langle \lambda, \theta \rangle \nu \]
for $\lambda, \theta \in \bigwedge^k V$.

We define a $2$-truncated simplicial set as follows. Set $X_0 = \{1\}$,
\[ X_1 = \{ \lambda \in \bigwedge V \suchthat \|\lambda\| = 1\},\]
\[ X_2 = \{ (\lambda, \theta, \phi) \in (X_1)^3 \suchthat \lambda \wedge \theta \wedge \star \phi = \nu.\}\]
We always implicitly assume that forms are of the appropriate degree for equations to make sense; for example, in order for $(\lambda, \theta, \phi)$ to be in $X_2$ with $\lambda \in \bigwedge^k V$, $\theta \in \bigwedge^\ell V$, and $\phi \in \bigwedge^m V$, it is necessary that $m = k+\ell$.

The face and degeneracy maps between $X_0$ and $X_1$ are obvious. The maps between $X_1$ and $X_2$ are
\begin{align*}
  d_0^2(\lambda, \theta, \phi) &= \theta, & d_1^2(\lambda, \theta, \phi) &= \phi, & d_2^2(\lambda, \theta, \phi) &= \lambda,
\end{align*}
\begin{align*}
  s_0^1(\lambda) &= (1, \lambda, \lambda), &   s_1^1(\lambda) &= (\lambda, 1, \lambda).
\end{align*}
The identities \eqref{eqn:twoface}--\eqref{eqn:facedegen} are immediate. Let $\mathcal{X}$ denote the $2$-coskeletal extension.

\begin{prop}\label{prop:exterior}
The simplicial set $\mathcal{X}$, with $\hat{\alpha} = \star$, satisfies the properties in Propositions \ref{prop:x2relations}, \ref{prop:simplicialassociativity}, and \ref{prop:alphafilter}, and therefore corresponds to a Frobenius object in $\rel$.
\end{prop}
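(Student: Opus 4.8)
The plan is to verify, in order, the three named properties for the simplicial set $\mathcal{X}$ built from the exterior algebra of $V$, taking $\hat\alpha = \star$. I would first record the linear-algebra facts I will rely on repeatedly: the Hodge star is a linear isometry of $\bigwedge V$, it satisfies $\star\star = (-1)^{k(n-k)}$ on $\bigwedge^k V$ (so it is a bijection of $X_1$), and the defining equation of $X_2$, namely $\lambda \wedge \theta \wedge \star\phi = \nu$, can be rewritten using the defining property of $\star$ as a statement about inner products. The key reformulation is that $(\lambda,\theta,\phi) \in X_2$ exactly when $\langle \lambda \wedge \theta, \phi\rangle = 1$ with $\lambda,\theta,\phi$ all unit vectors of complementary degrees $k,\ell,k+\ell$; by Cauchy--Schwarz this forces $\phi$ to be the unit vector $\lambda\wedge\theta/\|\lambda\wedge\theta\|$ whenever $\lambda\wedge\theta \neq 0$, and in particular $\phi$ is determined by $(\lambda,\theta)$.

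For Proposition \ref{prop:x2relations}: part (1), injectivity of $\delta_2$, is immediate since $\delta_2$ just reshuffles the three components of an element of $X_2 \subseteq (X_1)^3$, so distinct triples have distinct images. For part (2), I would unwind what $d_0^2\zeta = s_0^0 u$ means here: since $X_0 = \{1\}$ and $s_0^0$ is the inclusion, this says $\theta = 1 \in \bigwedge^0 V$, i.e. the middle form is the scalar unit. Plugging $\theta = 1$ into $\lambda\wedge\theta\wedge\star\phi = \nu$ gives $\lambda\wedge\star\phi = \nu$, which by the defining equation of $\star$ means $\langle\lambda,\phi\rangle = 1$; as both are unit vectors this forces $\lambda = \phi$, i.e. $d_2^2\zeta = d_1^2\zeta$. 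The symmetric statement with $\lambda = 1$ is checked the same way.

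For Proposition \ref{prop:alphafilter} I must show $X_2$ is invariant under the rotation $(\lambda,\theta,\phi)\mapsto(\theta,\star\phi,\star\lambda)$. Starting from $\lambda\wedge\theta\wedge\star\phi=\nu$, I would compute $\theta\wedge\star\phi\wedge\star\star\lambda$ and show it equals $\nu$, using $\star\star\lambda = \pm\lambda$ together with the graded-commutativity of the wedge and the behavior of $\star$ under the inner product; the sign bookkeeping from reordering wedge factors of degrees $k,\ell,n-k-\ell$ and from $\star\star$ must cancel. This is the step I expect to be the main obstacle, since getting all the Koszul and Hodge signs to conspire to $+1$ is where the construction could fail, and it is presumably the reason the paper defines $X_2$ via $\lambda\wedge\theta\wedge\star\phi=\nu$ rather than some more naive symmetric condition.

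Finally, for Proposition \ref{prop:simplicialassociativity} I would use the reformulation that, given $\zeta_0,\zeta_2\in X_2$ with $d_0^2\zeta_2 = d_1^2\zeta_0$, the required $3$-simplex amounts to completing an associativity diagram of wedge products; concretely this reduces to the associativity of the wedge product together with the fact that the "product" $\phi$ of $\lambda$ and $\theta$ is the normalized wedge, so that $(\lambda\wedge\theta)\wedge\xi$ and $\lambda\wedge(\theta\wedge\xi)$ agree after normalization. Since $X_3 = \Delta_3\mathcal{X}$ by $2$-coskeletality, I only need to exhibit a compatible tuple of $2$-simplices, and associativity of $\wedge$ supplies the common refinement. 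I would then invoke Theorem \ref{thm:simptofrob} to conclude that $\mathcal{X}$ corresponds to a Frobenius object in $\rel$.
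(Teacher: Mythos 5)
Your overall strategy is the paper's (reformulate membership in $X_2$ via the inner product, then check the three properties), but there is a genuine gap centered on your characterization of $X_2$, and it surfaces in the associativity step. Cauchy--Schwarz alone does \emph{not} force $\phi = \lambda\wedge\theta/\|\lambda\wedge\theta\|$: the conditions $\langle \lambda\wedge\theta,\phi\rangle = 1$ and $\|\phi\|=1$ have a whole sphere of solutions $\phi$ whenever $\|\lambda\wedge\theta\|>1$. What you need is the combination of submultiplicativity, $\|\lambda\wedge\theta\|\le\|\lambda\|\,\|\theta\|=1$, with Cauchy--Schwarz, $1=\langle\lambda\wedge\theta,\phi\rangle\le\|\lambda\wedge\theta\|\,\|\phi\|$; together these give $\|\lambda\wedge\theta\|=1$, and then the equality case yields $\phi=\lambda\wedge\theta$ exactly (this is the paper's Lemma \ref{lemma:ext}). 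The distinction matters: membership in $X_2$ requires $\lambda\wedge\theta$ to be a \emph{unit} vector, not merely nonzero, so the multiplication is partial and no normalization ever occurs. Consequently, your associativity argument misses the actual content of Proposition \ref{prop:simplicialassociativity}. Associativity of $\wedge$ is trivial and is not the issue; the issue is the \emph{existence} of the intermediate $2$-simplices. Concretely, given $\zeta_0=(\theta,\phi,\theta\wedge\phi)$ and $\zeta_2=(\lambda,\theta\wedge\phi,\lambda\wedge\theta\wedge\phi)$ in $X_2$, the required $\gamma\in X_3=\Delta_3\mathcal{X}$ needs $\zeta_3=(\lambda,\theta,\lambda\wedge\theta)$ and $\zeta_1=(\lambda\wedge\theta,\phi,\lambda\wedge\theta\wedge\phi)$ to lie in $X_2$, which requires $\|\lambda\wedge\theta\|=1$. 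This does not follow from ``agreement after normalization''; it follows from the sandwich $1=\|\lambda\wedge\theta\wedge\phi\|\le\|\lambda\wedge\theta\|\,\|\phi\|=\|\lambda\wedge\theta\|\le\|\lambda\|\,\|\theta\|=1$, which is exactly how the paper argues. Without this step your proof of the lifting property is incomplete (and, had the product really been the normalized wedge, the composability conditions would be different and the claim could fail).

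On the rotation invariance, which you flag as the expected obstacle but leave unverified: your plan is correct and the signs do cancel. With $\lambda\in\bigwedge^k V$, $\theta\in\bigwedge^\ell V$, $\phi\in\bigwedge^{k+\ell}V$, one has $\star\star\lambda=(-1)^{k(n-k)}\lambda$, while moving $\lambda$ from the last to the first slot in $\theta\wedge\star\phi\wedge\lambda$ costs $(-1)^{k(n-k-\ell)}(-1)^{k\ell}=(-1)^{k(n-k)}$; hence
\[
\theta\wedge\star\phi\wedge\star(\star\lambda)=(-1)^{2k(n-k)}\,\lambda\wedge\theta\wedge\star\phi=\nu,
\]
so $(\theta,\star\phi,\star\lambda)\in X_2$. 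The remaining items (injectivity of $\delta_2$ and part (2) of Proposition \ref{prop:x2relations}) are handled as in the paper. So the proposal needs two repairs: state and use the unit-norm characterization of $X_2$ correctly, and replace the appeal to associativity of $\wedge$ by the norm argument for the existence of intermediate simplices.
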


The proof of the proposition will make use of the following lemma.
\begin{lemma}\label{lemma:ext}
Let $(\lambda, \theta, \phi) \in X_2$. Then $\| \lambda \wedge \theta\| = 1$ and $\phi = \lambda \wedge \theta$.
\end{lemma}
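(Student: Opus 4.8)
The plan is to reduce the defining condition of $X_2$ to a single inner-product identity and then extract both conclusions from the equality case of the Cauchy--Schwarz inequality. Write $\lambda\in\bigwedge^k V$, $\theta\in\bigwedge^\ell V$, and $\phi\in\bigwedge^m V$ with $m=k+\ell$, so that $\lambda\wedge\theta$ and $\phi$ both lie in $\bigwedge^m V$. The defining property of the Hodge star, applied to the pair $\lambda\wedge\theta,\phi\in\bigwedge^m V$, gives $(\lambda\wedge\theta)\wedge\star\phi=\langle\lambda\wedge\theta,\phi\rangle\,\nu$. Since $(\lambda,\theta,\phi)\in X_2$ means precisely that the left-hand side equals $\nu$, and since $\nu\neq 0$, I would conclude that $\langle\lambda\wedge\theta,\phi\rangle=1$.

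Next I would play this identity against the normalizations. Because $\|\phi\|=1$, the Cauchy--Schwarz inequality gives $1=\langle\lambda\wedge\theta,\phi\rangle\le\|\lambda\wedge\theta\|$, so $\|\lambda\wedge\theta\|\ge 1$. For the reverse inequality I would invoke submultiplicativity of the induced norm under the wedge product, namely $\|\lambda\wedge\theta\|\le\|\lambda\|\,\|\theta\|=1$. Combining the two bounds gives $\|\lambda\wedge\theta\|=1$, which is the first assertion. For the second assertion, observe that we now have equality in Cauchy--Schwarz between two unit vectors whose inner product is $1$: writing $\lambda\wedge\theta=\phi+r$ with $r\perp\phi$ yields $\|\lambda\wedge\theta\|^2=1+\|r\|^2=1$, forcing $r=0$ and hence $\phi=\lambda\wedge\theta$.

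The one non-formal input, and the step I expect to be the main obstacle, is the upper bound $\|\lambda\wedge\theta\|\le\|\lambda\|\,\|\theta\|$. I would prove it from the Gram-determinant formula $\|v_1\wedge\cdots\wedge v_p\|^2=\det\big(\langle v_i,v_j\rangle\big)$ together with Fischer's inequality for positive semidefinite block matrices (equivalently, the generalized Hadamard inequality): expressing $\lambda$ and $\theta$ as wedges of vectors, the Gram matrix of the combined list is block-structured, and Fischer's inequality bounds its determinant by the product of the determinants of the diagonal blocks, which are exactly $\|\lambda\|^2$ and $\|\theta\|^2$. This argument is transparent when $\lambda$ and $\theta$ are decomposable, and the delicate point is to secure the bound for the forms that actually occur in $X_2$; once this norm estimate is in hand, the remainder of the argument is automatic.
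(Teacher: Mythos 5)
Your argument is correct and follows essentially the same route as the paper: extract $\langle\lambda\wedge\theta,\phi\rangle=1$ from the definition of $X_2$, get $\|\lambda\wedge\theta\|\geq 1$ from Cauchy--Schwarz, get $\|\lambda\wedge\theta\|\leq 1$ from submultiplicativity of the norm under the wedge product, and conclude $\phi=\lambda\wedge\theta$ from the equality case. The only difference is that the paper simply cites submultiplicativity as a known property of the norm on $\bigwedge V$, whereas you sketch a proof of it via Gram determinants and Fischer's inequality; that extra care is reasonable but not needed for the lemma as stated.
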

\begin{proof}
From the definition of $X_2$, we have $\langle \lambda \wedge \theta, \phi \rangle = 1$. The Cauchy-Schwarz inequality implies that $1 \leq \|\lambda \wedge \theta \| \|\phi\| = \|\lambda \wedge \theta \|$. On the other hand, since the norm on $\bigwedge V$ is submultiplicative, we have $\| \lambda \wedge \theta \| \leq \|\lambda\| \|\theta\| = 1$. Thus, $\|\lambda \wedge \theta\| = 1$. Furthermore, since
\begin{align*}
\|\lambda \wedge \theta - \phi\|^2 &= \|\lambda \wedge \theta\|^2 + \|\phi\|^2 - 2 \langle \lambda \wedge \theta, \phi \rangle \\
&= 1 + 1 - 2 = 0,
\end{align*}
we conclude that $\lambda \wedge \theta = \phi$.
\end{proof}

\begin{proof}[Proof of Proposition \ref{prop:exterior}]
The injectivity of $\delta_2$ is clear. For the second property in Proposition \ref{prop:x2relations}, we observe that, if $(\lambda, 1, \phi) \in X_2$ or $(1, \lambda, \phi) \in X_2$, then $\lambda = \phi$ by Lemma \ref{lemma:ext}.

In light of Lemma \ref{lemma:ext}, verification of the associativity property (Proposition \ref{prop:simplicialassociativity}) amounts to checking that, if $\lambda$, $\theta$, $\phi$, and $\lambda \wedge \theta \wedge \phi$ are all unit vectors, then $\lambda \wedge \theta$ is a unit vector if and only if $\theta \wedge \phi$ is a unit vector. This is true, since, by submultiplicativity, $\|\lambda \wedge \theta\| \leq \| \lambda \| \|\theta\| = 1$ and $1 = \|\lambda \wedge \theta \wedge \phi\| \leq \| \lambda \wedge \theta \| \|\phi\| = \|\lambda \wedge \theta\|$, so $\|\lambda \wedge \theta\| = 1$, and, similarly, $\|\theta \wedge \phi\| = 1$.

The rotation action of $\hat{\alpha} = \star$ takes $(\lambda, \theta, \phi)$ to $(\theta, \star \phi, \star \lambda)$. The invariance of $X_2$ follows from the fact that $\lambda \wedge \theta \wedge \star \phi = \theta \wedge \star \phi \wedge \star (\star \lambda)$, assuming that the degree of $\phi$ is the sum of the degrees of $\lambda$ and $\theta$.
\end{proof}

\subsection{Cohomology rings}

Let $M$ be a compact oriented Riemannian manifold of dimension $M$. Then the construction of Section \ref{sub:ext} can be applied fiberwise to the cotangent bundle $T^*M$. Furthermore, the construction completely passes to cohomology; in particular, the Hodge star passes to cohomology via its action on harmonic forms. Thus we obtain a Frobenius object in $\rel$ where $X$ is the unit sphere in $H^*(M)$.

In many cases, one can find smaller subsets $Y \subset H^*(M)$ that form sub-Frobenius objects. The requirements are as follows:
\begin{enumerate}
    \item $1 \in Y$,
    \item $Y$ is closed under $\star$,
    \item if $\lambda, \theta \in Y$ and $\|\lambda \wedge \theta\| = 1$, then $\lambda \wedge \theta \in Y$.
\end{enumerate}
We conclude with two simple examples.

\begin{example}
For any $M$, one can take $Y = \{1, \nu\}$. The Frobenius structure is given by
\begin{align*}
\eta & = \{1\},\\
\epsilon & = \{\nu\},\\
\mu & = \{(1,1,1), (1,\nu,\nu), (\nu, 1, \nu)\}, \\
\delta & = \{(1,\nu,1), (1,1,\nu), (\nu, \nu, \nu)\}.
\end{align*}
This is an example of a commutative Frobenius object.
\end{example}

\begin{example}\label{example:torus}
In the case where $M$ is the torus, one can take 
\[Y = \{\pm 1, \pm a, \pm b, \pm \nu\},\]
where $\{a,b\}$ is an orthonormal basis for $H^1(M)$. 

We note that, because the wedge product is only commutative up to sign, this is not a commutative Frobenius object. In fact, since $\hat{\alpha}^2(a) = \star^2 a = -a$, we see that, in this example, $\hat{\alpha}$ is not an involution. Thus we have an example of a Frobenius object with a nontrivial Nakayama automorphism.
\end{example}

\printbibliography
\end{document}